\newtheorem{Ass}{Assumption}
\newtheorem{Thm}{Theorem}
\newtheorem{Lem}{Lemma}
\newtheorem{Cor}{Corollary}
\newtheorem{Fact}{Fact}
\newtheorem{Rem}{Remark}
\newcommand{\argmin}[1]{\underset{#1}{\mbox{argmin }}}
\newcommand{\norm}[1]{\Vert{#1}\Vert}
\newcommand{\inprod}[1]{\langle #1\rangle}
\newcommand{\mb}[1]{{\mathbf{#1} }}
\newcommand{\mbb}[1]{{\mathbb{#1} }}
\newcommand{\mc}[1]{{\mathcal{#1} }}
\newcommand*{\tran}{^{\mkern-1.5mu\mathsf{T}}}
\newcommand*{\defeq}{\overset{\Delta}{=}}
\icmltitlerunning{Parallel SGD with Dynamic Batch Sizes for Stochastic Non-Convex Optimization}
\begin{document}

\twocolumn[
\icmltitle{On the Computation and Communication Complexity of Parallel SGD with Dynamic Batch Sizes for Stochastic Non-Convex Optimization}
\icmlsetsymbol{equal}{*}           
\begin{icmlauthorlist}
\icmlauthor{Hao~Yu}{alibaba}
\icmlauthor{Rong~Jin}{alibaba}
		%\icmlauthor{Cieua Vvvvv}{goo}
		%\icmlauthor{Iaesut Saoeu}{ed}
		%\icmlauthor{Fiuea Rrrr}{to}
		%\icmlauthor{Tateu H.~Yasehe}{ed,to,goo}
\end{icmlauthorlist}

\icmlaffiliation{alibaba}{Machine Intelligence Technology Lab, Alibaba Group (U.S.) Inc., Bellevue, WA}

\icmlcorrespondingauthor{Hao Yu}{eeyuhao@gmail.com}
\icmlkeywords{Stochastic Optimization}

\vskip 0.3in
]

\printAffiliationsAndNotice{}

\begin{abstract}
For SGD based distributed stochastic optimization, computation complexity, measured by the convergence rate in terms of the number of stochastic gradient calls, and communication complexity, measured by the number of inter-node communication rounds, are two most important performance metrics.  The classical data-parallel implementation of SGD over $N$ workers can achieve linear speedup of its convergence rate but incurs an inter-node communication round at each batch. We study the benefit of using dynamically increasing batch sizes in parallel SGD for stochastic non-convex optimization by charactering the attained convergence rate and the required number of communication rounds.  We show that for stochastic non-convex optimization under the P-L condition, the classical data-parallel SGD with exponentially increasing batch sizes can achieve the fastest known $O(1/(NT))$ convergence with linear speedup using only $\log(T)$ communication rounds. For general stochastic non-convex optimization, we propose a Catalyst-like algorithm to achieve the fastest known $O(1/\sqrt{NT})$ convergence with only $O(\sqrt{NT}\log(\frac{T}{N}))$ communication rounds.  
\end{abstract}

\section{Introduction}

Consider solving the following stochastic optimization
\begin{align}
\min_{ \mb{x}\in \mbb{R}^{m}}  \quad f(\mb{x})  \defeq \mbb{E}_{\zeta\sim \mc{D}}[F(\mb{x}; \zeta)] \label{eq:sto-opt}
\end{align}
with a fixed yet unknown distribution $\mc{D}$ only by accessing i.i.d. stochastic gradients $\nabla F(\cdot; \zeta)$. Most machine learning applications can be cast into the above stochastic optimization where $\mb{x}$ refers to the machine learning model, random variables $\zeta \sim \mc{D}$ refer to instance-label pairs and $F(\mb{x}; \zeta)$ refers to the corresponding loss function. For example, consider a simple least squares linear regression problem: let $\zeta_i = (\mb{a}_i, b_i)\in \mc{D}$ be training data collected offline or online\footnote{Note that if the training data is from a finite set collected offline, the stochastic optimization can also be written as a finite sum minimization, which is a special case of the stochastic optimization with known uniform distribution $\mc{D}$. However, for online training, since $(\mb{a}_i, b_i)$ is generated gradually and disclosed to us one by one, we need to solve the more challenging stochastic optimization with unknown distribution $\mc{D}$. The algorithms developed in this paper does not requires any knowledge of distribution $\mc{D}$.}, where each $\mb{a}_i$ is a feature vector and $b_i$ is its label, then $F(\mb{x}; \zeta_i) = \frac{1}{2} (\mb{a}_i\tran \mb{x} - b_i)^2$.  Throughout this paper, we have the following assumption: 
\begin{Ass}\label{ass:basic}~
\vspace {-7pt}
\begin{enumerate}
\item {\bf Smoothness:} The objective function $f(\mb{x})$ in problem \eqref{eq:sto-opt} is smooth with modulus $L$.
\item {\bf Unbiased gradients with bounded variances:} Assume there exits a stochastic first-order oracle (SFO) to provide independent unbiased stochastic gradients $\nabla F(\mb{x}; \zeta)$ satisfying $$\mbb{E}_{\zeta\sim \mc{D}}[\nabla F(\mb{x}; \zeta)] = \nabla f(\mb{x}),  \forall \mb{x}.$$ The  unbiased stochastic gradients have a bounded variance, i.e., there exits a constant $\sigma > 0$ such that 
\begin{align}
\mbb{E}_{\zeta\sim \mc{D}} \norm{\nabla F(\mb{x}; \zeta) - \nabla f(\mathbf{x})}^{2} \leq \sigma^{2}
\end{align}
\end{enumerate}
\end{Ass}
When solving  stochastic optimization \eqref{eq:sto-opt} only with sampled stochastic gradients, the computation complexity, which is also known as the convergence rate, is measured by the decay law of the solution error with respect to the number of access of the {\bf stochastic first-order oracle (SFO)} that provides sampled stochastic gradients  \cite{book_Nemirovsky83, Ghadimi16MP}. For strongly convex stochastic minimization, SGD type algorithms \cite{Nemirovski09SIOPT, Hazan14JMLR, Rakhlin12ICML} can achieve the optimal $O(1/T)$ convergence rate. That is, the error is ensured to be at most $O(1/T)$ after $T$ access of stochastic gradients. For non-convex stochastic minimization, which is  the case of training deep neural networks, SGD type algorithms can achieve an $O(1/\sqrt{T})$ convergence rate\footnote{For general non-convex functions, the convergence rate is usually measured in terms of $\norm{\nabla f(\mb{x})}^2$ which in some sense can be considered as  the counterpart of $f(\mb{x}) - f(\mb{x}^\ast)$ in convex case \cite{book_ConvexOpt_Nesterov,GhadimiLan13SIOPT}.}. Classical SGD type algorithms can be accelerated by utilizing multiple workers/nodes to follow a {\bf parallel SGD (PSGD)} procedure where each worker computes local stochastic gradients in parallel, aggregates all local gradients, and updates its own local solution using the average of all gradients. Such a data-parallel training strategy with $N$ workers has $O(1/(NT))$ convergence for strongly convex minimization and  $O(1/\sqrt{NT})$ convergence for smooth non-convex stochastic minimization, both of which is $N$ times faster than SGD with a single worker \cite{Dekel12JMLR, GhadimiLan13SIOPT, Lian15NIPS}.  This is known as the linear speedup\footnote{The linear speedup property is desirable for parallel computating algorithms since it means the algorithm's computation capability can be expanded with perfect horizontal scalability. } (with respect to the number of nodes) property of PSGD.  

However, such linear speedup is often not attainable in practice because PSGD involves additional coordination and communication cost as most other distributed/parallel algorithms do. In particular, PSGD requires aggregating local batch gradients among all workers after evaluations of local batch SGD. The corresponding communication cost for gradient aggregations is quite heavy and often becomes the performance bottleneck.  

Since the number of inter-node communication rounds in PSGD over multiple nodes is equal to the number of batches, it is desirable to use larger batch sizes to avoid communication overhead as long as the large batch size does not damage the overall computation complexity (in terms of number of access of SFO).  For training deep neural networks, practitioners have observed that SGD using dynamically increasing batch sizes can converges to similar test accuracy with the same number of epochs but significantly fewer number of batches when compared with SGD with small batch sizes \cite{Devarakonda17ArXiv,Smith18ICLR}. The idea of using large or increasing batch sizes can be partially backed by some recent theoretical works \cite{Bottou18SIAM, DE17AISTATS}. It is shown in \cite{DE17AISTATS} that if the batch size is sufficiently large such that the randomness, i.e., variances, is dominated by gradient magnitude, then SGD essentially degrades to deterministic gradient descent. However, in the worst case, e.g., stochastic optimization \eqref{eq:sto-opt} or large-scale optimization with limited budgets of SFO access,  SGD with large batch sizes considered in \cite{DE17AISTATS} can have worse convergence performance than SGD with fixed small batch sizes \cite{Bottou08NIPS, Bottou18SIAM}. For {\bf strongly convex} stochastic minimization, it is proven in \cite{Friedlander12SIOPT, Bottou18SIAM} that SGD with exponentially increasing batch sizes can achieve the same $O(1/T)$ convergence as SGD with fixed small batch sizes, where $T$ is the number of access of SFO.  The results in \cite{Friedlander12SIOPT, Bottou18SIAM} are encouraging since it means using exponentially increasing batch sizes can preserve the low $O(1/T)$ computation complexity with $\log(T)$ communication complexity that is significantly lower than $O(T)$ required by SGD with fixed batch sizes for distributed strongly convex stochastic minimization. However, the computation and communication complexity remains under-explored for distributed stochastic non-convex optimization, which is the case of training deep neural networks. While work \cite{SmithLe18ICLR, Smith18ICLR} justify SGD with increasing batch sizes by relating it with the integration of a stochastic differential equation for which decreasing learning rates can roughly compensate the effect of increasing batch sizes, rigorous theoretical characterization on its computation and communication complexity (as in \cite{Nemirovski09SIOPT,Bottou18SIAM}) is missing for stochastic non-convex optimization. In general, it remains unclear {\it ``If using dynamic batch sizes in parallel SGD can yield the same fast $O(1/\sqrt{NT})$ convergence rate (with linear speedup with respect to the number of nodes) as the classical PSGD for non-convex optimization?"} and {\it ``What is the corresponding communication complexity of using dynamic batch sizes to solve distributed non-convex optimization?"}

{\bf Our Contributions:} This paper aims to characterize both computation and communication complexity when using the idea of dynamically increasing batch sizes in SGD to solve stochastic non-convex optimization with $N$ parallel workers.  We first consider non-convex optimization satisfying the Polyak-Lojasiewicz (P-L) condition, which can be viewed as a generalization of strong convexity for non-convex optimization.  We show that by simply exponentially increasing the batch sizes at each worker (formally described in Algorithm \ref{alg:new}) in the classical data-parallel SGD, we can solve non-convex optimization with the fast $O(1/(NT))$ convergence using only $O(\log(T))$ communication rounds.  For general stochastic non-convex optimization (without P-L condition), we propose a Catalyst-like \cite{Lin15NIPS,Paquette18AISTATS} approach (formally described in Algorithm \ref{alg:new-catalyst}) that wraps Algorithm \ref{alg:new} with an outer loop that iteratively introduces auxiliary problems. We show that Algorithm \ref{alg:new-catalyst} can solves general stochastic non-convex optimization with $O(1/\sqrt{NT})$ computation complexity and $O(\sqrt{TN}\log(\frac{T}{N}))$ communication complexity.  In both cases, using dynamic batch sizes can achieve the linear speedup of convergence with communication complexity less than that of existing communication efficient parallel SGD methods with fixed batch sizes \cite{Stich18ArXiv, Yu18ArXivAAAI}.

\section{Non-Convex Minimization Under the P-L Condition} \label{sec:PL-condition}
This section considers problem \eqref{eq:sto-opt} satisfying the Polyak-Lojasiewicz (P-L) condition defined in Assumption \ref{ass:P-L}.
\begin{Ass}\label{ass:P-L}
The objective function $f(\mb{x})$ in problem \eqref{eq:sto-opt} satisfies the Polyak-Lojasiewicz (P-L) condition with modulus $\mu > 0$. That is, 
\begin{align}
\frac{1}{2}\norm{\nabla f(\mb{x})}^2 \geq \mu (f(\mb{x}) -  f^\ast), \forall \mb{x}
\end{align}
where $f^\ast$ is the global minimum in problem \eqref{eq:sto-opt}.
\end{Ass} 

The P-L condition is originally introduced by Polyak in \cite{Polyak63} and holds for many machine learning models. Neither the convexity of $f(\mb{x})$ nor the uniqueness of its global minimizer is required in the P-L condition. In particular, the P-L condition is weaker than many other popular conditions, e.g., strong convexity and the error bound condition, used in optimization literature \cite{Karimi16}.  See e.g. Fact \ref{fact:sc-imply-pl}.

\begin{Fact}[Appendix A in \cite{Karimi16}] \label{fact:sc-imply-pl}
	If smooth function $\phi: \mbb{R}^m\mapsto \mbb{R}$ is strongly convex with modulus $\mu>0$, then it satisifes the P-L condition with the same modulus $\mu$.
\end{Fact}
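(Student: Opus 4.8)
The plan is to obtain the P-L inequality directly from the first-order characterization of $\mu$-strong convexity. Since $\phi$ is smooth and $\mu$-strongly convex, we have for all $\mb{x}, \mb{y}\in\mbb{R}^m$
\[
\phi(\mb{y}) \geq \phi(\mb{x}) + \nabla\phi(\mb{x})\tran(\mb{y} - \mb{x}) + \frac{\mu}{2}\norm{\mb{y} - \mb{x}}^2 .
\]
The key idea is to fix an arbitrary $\mb{x}$ and minimize both sides of this inequality over $\mb{y}\in\mbb{R}^m$; since the bound holds pointwise in $\mb{y}$, taking the minimum on each side separately preserves the inequality.

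First I would evaluate the left-hand minimum: $\mu$-strong convexity makes $\phi$ coercive, so its global minimum is attained and $\min_{\mb{y}}\phi(\mb{y}) = \phi^\ast$. Next I would evaluate the right-hand minimum: the right-hand side is a strictly convex quadratic in $\mb{y}$ whose unique minimizer is $\mb{y}^\star = \mb{x} - \frac{1}{\mu}\nabla\phi(\mb{x})$, at which the value equals $\phi(\mb{x}) - \frac{1}{2\mu}\norm{\nabla\phi(\mb{x})}^2$. Plugging in this point (equivalently, completing the square) gives this in one line.

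Combining the two computations yields $\phi^\ast \geq \phi(\mb{x}) - \frac{1}{2\mu}\norm{\nabla\phi(\mb{x})}^2$, and rearranging gives $\frac{1}{2}\norm{\nabla\phi(\mb{x})}^2 \geq \mu\,(\phi(\mb{x}) - \phi^\ast)$, which is exactly the P-L condition of Assumption \ref{ass:P-L} with the same modulus $\mu$; since $\mb{x}$ was arbitrary, the claim follows. There is no genuinely hard step here — the only point needing a moment of care is the justification that one may minimize the two sides of a pointwise inequality independently over the common domain $\mbb{R}^m$, and noting that the argument goes through verbatim with $\inf$ in place of $\min$ if attainment were not invoked.
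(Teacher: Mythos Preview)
The paper does not supply its own proof of this fact; it simply cites Appendix~A of \cite{Karimi16}. Your argument is correct and is exactly the standard derivation given there: start from the first-order strong-convexity inequality, minimize both sides over $\mb{y}$, and rearrange.
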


One important example is: $ f(\mb{x}) = g(\mb{Ax})$ with strongly convex $g(\cdot)$ and possibly rank deficient matrix $\mb{A}$, e.g. $f(\mb{x}) = \norm{\mb{Ax} -\mb{b}}^2$ used in least squares regressions. While $f(\mb{x}) =g(\mb{Ax})$ is not strongly convex when $\mb{A}$ is rank deficient, it turns out that such $f(\mb{x})$ always satisfies the P-L condition \cite{Karimi16}.

Consider the {\bf C}ommunication {\bf R}educed {\bf P}arallel {\bf S}tochastic {\bf G}radient {\bf D}escent ({\bf CR-PSGD}) algorithm described in Algorithm \ref{alg:new}. The inputs of CR-PSGD are:  (1) $N$,  the number of parallel workers; (2) $T$,  the total number of gradient evaluations at each worker; (3) $\mb{x}_1$, the common initial point at each worker;  (3) $\gamma > 0$, the learning rate; (4) $B_1$, the initial SGD batch size at each worker; (5) $\rho > 1$, the batch size scaling factor. Compared with the classical PSGD, our CR-PSGD has the minor change that each worker exponentially increases its own SGD batch size with a factor $\rho$.  Since $B_t$ increasingly exponentially, it is easy to see that the ``while" loop in Algorithm \ref{alg:new} terminates after at most $O(\log T)$ steps. Meanwhile, we note that inter-worker communication is used only to aggregate individual batch SGD averages and happens only once in each ``while" loop iteration. As a consequence, CR-PSGD only involves $O(\log T)$ rounds of communication. The remaining part of this section further proves that CR-PSGD has $O(1/(NT))$ convergence. 

Similar ideas of exponentially increasing batch size appear in other works, e.g., \cite{Hazan14JMLR, Zhang13ICML}, for different purposes and with different algorithm dynamics. In this paper, we explore this idea in the context of parallel stochastic optimization. It is impressive that such a simple idea enables us to obtain a parallel algorithm to achieve the fast $O(1/(NT))$ convergence with only $O(\log T)$ rounds of communication  for stochastic optimization under the P-L condition. When considering stochastic strongly convex minimization that is a subclass of stochastic optimization under the P-L condition,  the $O(\log T)$ communication complexity attained by our CR-PSGD is significantly less than the $O(\sqrt{NT})$ communication complexity attained by the local SGD method in \cite{Stich18ArXiv}.

\begin{algorithm}[tb]
	\caption{ $\text{CR-PSGD}(f, N, T, \mb{x}_1, B_1, \rho, \gamma)$}\label{alg:new}
	\begin{algorithmic}[1]
		\STATE {\bf Input:} $N$, $T$, $\mb{x}_1 \in \mathbb{R}^m$, $\gamma $ , $B_1$ and $\rho > 1$.
		\STATE Initialize $t=1$
		\WHILE{$\sum_{\tau=1}^t B_\tau \leq T$}
		\STATE Each worker $i$ observes $B_t$ unbiased i.i.d. stochastic gradients at point $\mb{x}_{t}$ given by $\mb{g}_{i,j} \defeq \nabla F(\mb{x}_{t}; \zeta_{i,j}), j\in\{1,\ldots,B_t\}, \zeta_{i,j}\sim \mc{D}$ and calculates its batch SGD average $\bar{\mb{g}}_{t,i} = \frac{1}{B_t}\sum_{j=1}^{B_t} \mb{g}_{i,j}$.
		\STATE Aggregate all $\bar{\mb{g}}_{t,i}$ from $N$ workers and compute their average $\bar{\mb{g}}_{t} = \frac{1}{N} \sum_{i=1}^{N} \bar{\mb{g}}_{t,i}$.
		\STATE Update $\mb{x}_{t+1}$ over all $N$ workers in parallel via: $\mb{x}_{t+1} = \mb{x}_{t} - \gamma \bar{\mb{g}}_{t}$.
		\STATE Set $B_{t+1} = \lfloor \rho^t B_{1} \rfloor$ where $\lfloor z\rfloor$ represents the largest integer no less than $z$.
		\STATE Update $t \leftarrow t+1$.
		\ENDWHILE
		\STATE {\bf Return:} $\mathbf{x}_t$
	\end{algorithmic}
\end{algorithm}

The next simple lemma relates per-iteration error with the batch sizes and is a key property to establish the convergence rate of Algorithm \ref{alg:new}.
\begin{Lem}\label{lm:PL-one-step}
Consider problem \eqref{eq:sto-opt} under Assumptions \ref{ass:basic}-\ref{ass:P-L}. If we choose $\gamma < \frac{1}{L}$ in Algorithm \ref{alg:new}, then for all $t\in\{1,2,\ldots,\}$, we have
{\small 
\begin{align}
    \mbb{E}[f(\mb{x}_{t+1}) - f^\ast)] \leq (1-\nu) \mbb{E}[f(\mb{x}_{t}) - f^\ast] + \frac{\gamma (2-L\gamma)}{2NB_t} \sigma^2 \label{eq:lm-PL-one-step}
\end{align}
}%
where $f^\ast$ is the global minimum in problem \eqref{eq:sto-opt} and $\nu \defeq  \frac{1}{2}\gamma \mu (1- L\gamma)$ satisfies $0<\nu <1$. 

\end{Lem}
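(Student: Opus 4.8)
The plan is the standard three-move argument---$L$-smoothness descent, then a variance bound on the aggregated gradient, then the P-L inequality---with the only real content being the bookkeeping that produces the $1/(NB_t)$ factor. Throughout, let $\mbb{E}_t[\cdot]$ denote expectation conditioned on $\mb{x}_t$ and write $\bar{\mb{g}}_t=\nabla f(\mb{x}_t)+\mb{e}_t$, where $\mb{e}_t\defeq\bar{\mb{g}}_t-\nabla f(\mb{x}_t)$ is the aggregated noise. Because $\bar{\mb{g}}_t=\frac{1}{NB_t}\sum_{i=1}^{N}\sum_{j=1}^{B_t}\mb{g}_{i,j}$ is an average of $NB_t$ independent unbiased stochastic gradients at the common point $\mb{x}_t$, Assumption~\ref{ass:basic} gives $\mbb{E}_t[\mb{e}_t]=\mb{0}$ and, expanding the square and using pairwise independence, $\mbb{E}_t\norm{\mb{e}_t}^2=\frac{1}{(NB_t)^2}\sum_{i,j}\mbb{E}_t\norm{\mb{g}_{i,j}-\nabla f(\mb{x}_t)}^2\leq\frac{\sigma^2}{NB_t}$. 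This is the only place $N$ and $B_t$ enter, and it is what ultimately yields both the linear speedup and the $O(\log T)$ communication count once the one-step bound is unrolled.

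Next I would plug the update $\mb{x}_{t+1}=\mb{x}_t-\gamma\bar{\mb{g}}_t$ into the $L$-smoothness descent inequality $f(\mb{x}_{t+1})\leq f(\mb{x}_t)-\gamma\inprod{\nabla f(\mb{x}_t),\bar{\mb{g}}_t}+\frac{L\gamma^2}{2}\norm{\bar{\mb{g}}_t}^2$, substitute $\bar{\mb{g}}_t=\nabla f(\mb{x}_t)+\mb{e}_t$, expand, and take $\mbb{E}_t[\cdot]$ so that the terms linear in $\mb{e}_t$ vanish. This leaves $\mbb{E}_t[f(\mb{x}_{t+1})]\leq f(\mb{x}_t)-\gamma(1-\tfrac{L\gamma}{2})\norm{\nabla f(\mb{x}_t)}^2+\tfrac{L\gamma^2}{2}\cdot\tfrac{\sigma^2}{NB_t}$. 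Since $\gamma<\tfrac1L$, the gradient-norm coefficient equals $\tfrac{\gamma(2-L\gamma)}{2}>0$ and the noise coefficient satisfies $\tfrac{L\gamma^2}{2}\leq\tfrac{\gamma(2-L\gamma)}{2}$, so the bound becomes $\mbb{E}_t[f(\mb{x}_{t+1})]-f^\ast\leq(f(\mb{x}_t)-f^\ast)-\tfrac{\gamma(2-L\gamma)}{2}\norm{\nabla f(\mb{x}_t)}^2+\tfrac{\gamma(2-L\gamma)}{2NB_t}\sigma^2$.

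Finally I would apply the P-L inequality $\norm{\nabla f(\mb{x}_t)}^2\geq 2\mu(f(\mb{x}_t)-f^\ast)$ to the negative gradient-norm term, turning it into $-\gamma\mu(2-L\gamma)(f(\mb{x}_t)-f^\ast)$ and hence a contraction factor $1-\gamma\mu(2-L\gamma)$ multiplying $\mbb{E}_t[f(\mb{x}_t)-f^\ast]$. Using $f(\mb{x}_t)-f^\ast\geq 0$ and $2-L\gamma\geq\tfrac12(1-L\gamma)$, this coefficient can be relaxed to $1-\nu$ with $\nu=\tfrac12\gamma\mu(1-L\gamma)$; taking total expectations via the tower property then gives \eqref{eq:lm-PL-one-step}. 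The side claim $0<\nu<1$ is immediate: $\gamma<\tfrac1L$ makes $1-L\gamma\in(0,1)$ so $\nu>0$, while $\gamma(1-L\gamma)\leq\tfrac{1}{4L}$ (maximum at $\gamma=\tfrac{1}{2L}$) together with the elementary fact $\mu\leq L$ (obtained by comparing the P-L inequality with the smoothness consequence $\tfrac{1}{2L}\norm{\nabla f(\mb{x})}^2\leq f(\mb{x})-f^\ast$) gives $\nu\leq\tfrac{\mu}{8L}<1$.

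The argument has no real obstacle; the only step needing genuine care is the variance identity $\mbb{E}_t\norm{\mb{e}_t}^2\leq\sigma^2/(NB_t)$, which relies on independence \emph{both} across the $N$ workers and across the $B_t$ per-worker samples, and---to land exactly on the stated constants---on the two deliberate relaxations above ($\tfrac{L\gamma^2}{2}\leq\tfrac{\gamma(2-L\gamma)}{2}$ and $1-\gamma\mu(2-L\gamma)\leq 1-\nu$), both valid precisely because $\gamma<\tfrac1L$.
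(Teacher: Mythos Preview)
Your argument is correct, and it is genuinely different from the paper's. You take the standard ``bias--variance'' route: write $\bar{\mb{g}}_t=\nabla f(\mb{x}_t)+\mb{e}_t$, take conditional expectation so the cross terms vanish, land on the clean descent bound with gradient coefficient $\tfrac{\gamma(2-L\gamma)}{2}$ and noise coefficient $\tfrac{L\gamma^2}{2}$, and then apply P-L. This yields the sharper contraction $1-\gamma\mu(2-L\gamma)$, which you then deliberately weaken (via $2-L\gamma\geq\tfrac12(1-L\gamma)$) to match the stated $\nu$; you similarly weaken the variance coefficient $\tfrac{L\gamma^2}{2}\leq\tfrac{\gamma(2-L\gamma)}{2}$ to match the stated constant. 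Both relaxations hold under $\gamma<1/L$, so the proof goes through. The paper instead works \emph{pathwise}: it does not take conditional expectation early but manipulates the inner product $-\gamma\inprod{\nabla f(\mb{x}_t),\bar{\mb{g}}_t}$ via Young's inequality and then lower-bounds $\norm{\bar{\mb{g}}_t}^2\geq\tfrac12\norm{\nabla f(\mb{x}_t)}^2-\norm{\mb{e}_t}^2$, arriving directly at the coefficient $\tfrac14\gamma(L\gamma-1)$ on $\norm{\nabla f(\mb{x}_t)}^2$ and hence exactly at $\nu=\tfrac12\gamma\mu(1-L\gamma)$ without any final relaxation. Your route is shorter and gives a strictly better intermediate contraction (indeed the paper's own Remark after the lemma acknowledges the constants are not tight); the paper's route has the minor advantage that every step holds before expectations, which some readers find cleaner when tracking measurability. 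The verification of $0<\nu<1$ via $\mu\leq L$ is essentially identical in both.
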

\begin{proof}
Fix $t\geq 1$. By the smoothness of $f(\mb{x})$ in Assumption \ref{ass:basic}, we have
{\footnotesize
\begin{align}
   &f(\mb{x}_{t+1}) \nonumber\\
    \leq & f(\mb{x}_{t}) + \inprod{\nabla f(\mb{x}_{t}), \mb{x}_{t+1} - \mb{x}_{t}} + \frac{L}{2} \norm{\mb{x}_{t+1} - \mb{x}_{t}}^2 \nonumber\\
    \overset{(a)}{=} & f(\mb{x}_{t}) - \gamma \inprod{\nabla f(\mb{x}_{t}), \bar{\mb{g}}_{t}} + \frac{L}{2}\gamma^2 \norm{\bar{\mb{g}}_{t}}^2 \nonumber\\
    = & f(\mb{x}_{t}) + \gamma \inprod{\bar{\mb{g}}_{t} - \nabla f(\mb{x}_{t}), \bar{\mb{g}}_{t}} - \gamma \norm{\bar{\mb{g}}_{t}}^2+ \frac{L}{2}\gamma^2  \norm{\bar{\mb{g}}_{t}}^2 \nonumber
        \end{align}
   \begin{align}
    \overset{(b)}{\leq}& f(\mb{x}_{t}) + \frac{\gamma}{2}\norm{\bar{\mb{g}}_{t} - \nabla f(\mb{x}_{t})}^2 + \frac{\gamma}{2}( L\gamma -1) \norm{\bar{\mb{g}}_{t}}^2 \nonumber\\
    \overset{(c)}{\leq}& f(\mb{x}_{t})  + \frac{\gamma}{4}( L\gamma -1)\norm{\nabla f(\mb{x}_{t})}^2 + \frac{\gamma}{2}(2-L\gamma) \norm{\bar{\mb{g}}_{t} - \nabla f(\mb{x}_{t})}^2 \nonumber\\
    \overset{(d)}{\leq}& f(\mb{x}_{t})  + \frac{1}{2}\gamma \mu( L\gamma -1)(f(\mb{x}_{t}) -f^\ast)  \nonumber \\ &\quad + \frac{\gamma}{2}(2-L\gamma) \norm{\bar{\mb{g}}_{t} - \nabla f(\mb{x}_{t})}^2 \label{eq:pf-lm-eq1}
\end{align}
}%
where (a) follows by substituting $\mb{x}_{t+1} = \mb{x}_{t} - \gamma \bar{\mb{g}}_{t}$; (b) follows by applying elementary inequality $\inprod{\mb{u}, \mb{v}} \leq \frac{1}{2}\norm{\mb{u}}^2 + \frac{1}{2}\norm{\mb{v}}^2$ with $\mb{u} =\bar{\mb{g}}_{t} - \nabla f(\mb{x}_{t})$ and $\mb{v} =\bar{\mb{g}}_{t}$; (c) follows by noting that $L\gamma -1 <0$ under our selection of $\gamma$ and applying elementary inequality $\norm{\mb{u} + \mb{v}}^2 \geq \frac{1}{2}\norm{\mb{u}}^2 -\norm{\mb{v}}^2$ with $\mb{u} = \nabla f(\mb{x}_{t})$ and $\mb{v} = \bar{\mb{g}}_{t} - \nabla f(\mb{x}_{t})$; and (d) follows by noting that $\gamma(L\gamma -1) <0$ under our selection of $\gamma$ and $\norm{\nabla f(\mb{x}_{t})}^2 \geq 2\mu(f(\mb{x}_{t}) -f^\ast)$ by Assumption \ref{ass:P-L}. 

Defining $\nu \overset{\Delta}{=} \frac{1}{2}\gamma \mu (1- L\gamma) $, subtracting $f^\ast$ from both sides of \eqref{eq:pf-lm-eq1}, and rearranging terms yields
\begin{align}
    &f(\mb{x}_{t+1}) - f^\ast \nonumber\\
     \leq& (1-\nu) (f(\mb{x}_{t}) -f^\ast) + \frac{\gamma}{2}(2-L\gamma) \norm{\bar{\mb{g}}_{t} - \nabla f(\mb{x}_{t})}^2
\end{align}
Taking expectations on both sides and noting that $\mbb{E}[\norm{\bar{\mb{g}}_t - \nabla f(\mb{x}_{t})}^2] \leq \frac{1}{NB_t}\sigma^2$, which further follows from Assumption \ref{ass:basic}  and the fact that each $\bar{\mb{g}}_t$ is the average of $NB_t$ i.i.d. stochastic gradients evaluated at the same point, yields
\begin{align*}
    \mbb{E}[f(\mb{x}_{t+1}) - f^\ast] \leq (1-\nu) \mbb{E}[f(\mb{x}_{t}) - f^\ast] + \frac{\gamma (2-L\gamma)}{2NB_t} \sigma^2
\end{align*}

It remains to verify why $0< \nu <1$.  Since $\gamma < \frac{1}{L}$, it is easy to see $\nu > 0$. Next, we show  $\frac{1}{2}\gamma\mu(1-L\gamma) <1$. By the smoothness of $f(\mb{x})$ (and Fact \ref{fact:smooth-grad-norm-ub} in Supplement \ref{app:basic}), we have 
\begin{align}
\frac{1}{2}\norm{\nabla f(\mb{x})}^2 \leq L(f(\mb{x}) - f^\ast), \forall \mb{x} \label{eq:pf-lm-eq2}
\end{align}
By Assumption \ref{ass:P-L}, we have 
\begin{align}
\frac{1}{2}\norm{\nabla f(\mb{x})}^2 \geq \mu (f(\mb{x}) -  f^\ast), \forall \mb{x} \label{eq:pf-lm-eq3}
\end{align}
Inequalities \eqref{eq:pf-lm-eq2} and \eqref{eq:pf-lm-eq3} together imply that $\mu \leq L$, which further implies that $\frac{1}{2} \gamma\mu(1-L\gamma) \leq \frac{1}{2} \gamma L(1-L\gamma) <1$.
\end{proof}

\begin{Rem}
	Note that by adapting steps (b) and (c) of \eqref{eq:pf-lm-eq1} in the proof of Lemma \ref{lm:PL-one-step}, i.e., using inequalities with slightly different coefficients for the squared norm terms, we can obtain \eqref{eq:lm-PL-one-step} with different $\nu$ values. Larger $\nu$ variants (with possibly more stringent conditions on the selection rule of $\gamma$) may lead to faster convergence (but with the same order) of Algorithm \ref{alg:new}. This paper does not explore further in this direction since the current simple analysis is already sufficient to provide the desired order of convergence/communication. The suggested finer development on $\nu$ can improve the constant factor in the rates but does not improve their order. Nevertheless, it is worthwhile to point out that the finer development on $\nu$ can be helpful to guide practitioners to tune Algorithm \ref{alg:new} according to their specific minimization problems.
\end{Rem} 

The $O(\frac{1}{NT})$ convergence with $O(\log T)$ communication rounds is summarized in Theorem \ref{thm:PL-rate}.

\begin{Thm} \label{thm:PL-rate}
Consider problem \eqref{eq:sto-opt} under Assumptions \ref{ass:basic}-\ref{ass:P-L}. Let $T>0$ be a given constant. If we choose $B_1 \geq 2$, $\gamma < \frac{1}{L}$ and $1 < \rho < \frac{1}{1-\nu}$, where\footnote{It is shown at the bottom of the proof for Lemma \ref{lm:PL-one-step} that $\nu$ is ensured to satisfy $0 < \nu < 1$ under the selection $\gamma < \frac{1}{L}$.} $\nu \defeq \frac{1}{2}\gamma \mu (1- L\gamma)$,  in Algorithm \ref{alg:new}, then the final output $\mb{x}_t$ returned by Algorithm \ref{alg:new} satisfies 
\begin{align}
\mbb{E}[f(\mb{x}_t) - f^\ast] \leq&  \frac{c_1 (f(\mb{x}_{1}) - f^\ast)}{T^{1+\delta}} + \frac{c_2}{NT} \nonumber\\
=&  O(\frac{1}{T^{1+\delta}}) + O(\frac{1}{NT})
\end{align}
where  $\delta \defeq \log_{\rho}(\frac{1}{1-\nu}) -1 >0$, $c_1 \defeq  \frac{1}{1-\nu} \big(\frac{B_1}{\rho -1}\big)^{1+\delta}$, $c_2 \defeq \frac{\rho^2\gamma(2-L\gamma))\sigma^2}{(1 - (1-\nu)\rho)(\rho - 1)}$, and $f^\ast$ is the minimum value of problem \eqref{eq:sto-opt}.
\end{Thm}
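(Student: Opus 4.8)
The plan is to unroll the one-step inequality of Lemma~\ref{lm:PL-one-step} and then rewrite the resulting bound, which is indexed by the iteration counter, as a bound in the SFO budget $T$. Abbreviate $A_t \defeq \mbb{E}[f(\mb{x}_t)-f^\ast]$ and $C \defeq \frac{\gamma(2-L\gamma)\sigma^2}{2N}$, so that Lemma~\ref{lm:PL-one-step} reads $A_{t+1}\le(1-\nu)A_t + C/B_t$. Iterating this from $t=1$ gives
\[
A_{t+1}\ \le\ (1-\nu)^{t}A_1 \ +\ C\sum_{\tau=1}^{t}(1-\nu)^{t-\tau}\frac{1}{B_\tau}.
\]

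To control the sum I would use the two-sided estimate $\tfrac12\rho^{\tau-1}B_1 \le B_\tau \le \rho^{\tau-1}B_1$ on the batch sizes; the lower bound absorbs the rounding in the update $B_{t+1}=\lfloor\rho^tB_1\rfloor$ and is where the hypothesis $B_1\ge2$ enters. Substituting $1/B_\tau \le 2\rho^{1-\tau}/B_1$ and pulling out $(1-\nu)^t$ turns the sum into a geometric series with ratio $\frac{1}{(1-\nu)\rho}$, which is strictly larger than $1$ precisely because $\rho<\frac{1}{1-\nu}$; hence the series is dominated by its last term, and after summing it the factor $(1-\nu)^t$ cancels against $((1-\nu)\rho)^{-t}$ to leave a bare $\rho^{-t}$, while the constants $2$ and $\tfrac12$ cancel as well. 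One obtains
\[
A_{t+1}\ \le\ (1-\nu)^{t}A_1\ +\ \frac{\gamma(2-L\gamma)\sigma^2}{N}\cdot\frac{\rho^{1-t}}{B_1\bigl(1-(1-\nu)\rho\bigr)}.
\]

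It remains to identify the terminal counter value. If the loop exits with counter $T'$ then the output is $\mb{x}_{T'}$ and the last display is used with $t=T'-1$; moreover $T'$ is the first index with $\sum_{\tau=1}^{T'}B_\tau>T$, so the upper bound $B_\tau\le\rho^{\tau-1}B_1$ and the closed form of a geometric sum give $T<\sum_{\tau=1}^{T'}B_\tau<\frac{B_1\rho^{T'}}{\rho-1}$, i.e.\ $\rho^{-T'}<\frac{B_1}{T(\rho-1)}$. For the first term, the definition $\delta=\log_\rho\frac{1}{1-\nu}-1$ is equivalent to $(1-\nu)=\rho^{-(1+\delta)}$, so $(1-\nu)^{T'-1}=\frac{1}{1-\nu}(\rho^{T'})^{-(1+\delta)}<\frac{1}{1-\nu}\bigl(\frac{B_1}{\rho-1}\bigr)^{1+\delta}T^{-(1+\delta)}$, which is exactly the $c_1$ term (using $A_1=f(\mb{x}_1)-f^\ast$). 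For the second term, $\rho^{1-(T'-1)}=\rho^{2}\rho^{-T'}<\frac{\rho^2 B_1}{T(\rho-1)}$, and substituting this into the previous display reproduces the $c_2$ term. The strict inequalities $\delta>0$ and $1-(1-\nu)\rho>0$ both come from $\rho<\frac{1}{1-\nu}$, so all constants are positive and finite.

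No new idea is required — the proof is a careful unrolling — so the work concentrates in two places. The first is the geometric-sum step: one must verify that $\frac{1}{(1-\nu)\rho}>1$ (this is the role of the upper bound on $\rho$), so that the summation collapses onto its \emph{last} index and produces a clean $\rho^{-t}$ decay rather than a $(1-\nu)^{t}$ or mixed rate. The second is the bookkeeping tying $T'$ to $T$: one needs the two-sided batch-size bound — and hence $B_1\ge2$ — together with the exact partial-sum formula in order to land on the stated constants $c_1,c_2$ rather than order-of-magnitude surrogates. Everything else is routine algebra.
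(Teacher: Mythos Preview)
Your proposal is correct and follows essentially the same route as the paper's proof: unroll Lemma~\ref{lm:PL-one-step}, bound $1/B_\tau$ via $B_\tau\ge\tfrac12\rho^{\tau-1}B_1$ (using $B_1\ge2$), sum the resulting geometric series using $(1-\nu)\rho<1$, and then convert the iteration index to $T$ via the exit condition together with $B_\tau\le\rho^{\tau-1}B_1$. The only cosmetic difference is that you factor out $(1-\nu)^t$ and view the series as having ratio $1/((1-\nu)\rho)>1$, whereas the paper factors out $\rho^{-(t-2)}$ and views it as having ratio $(1-\nu)\rho<1$; the two are algebraically identical and yield the same constants $c_1,c_2$.
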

\begin{proof}
See Supplement \ref{sec:pf-PL-rate}.
\end{proof}

\begin{Rem}
Since $\delta > 0$, $O(\frac{1}{T^{1+\delta}})$ decays faster than $O(\frac{1}{NT})$ when $T$ is sufficiently large.  In fact, we can even explicitly choose suitable $\rho$ to make $\delta$ sufficiently large, e.g., we can choose $1 < \rho < \sqrt{\frac{1}{1-\nu}}$ to ensure $\delta>1$ such that $O(\frac{1}{T^{1+\delta}}) <O(\frac{1}{T^2})$. In this case, as long as $T\geq N$, which is almost always true in practice, the error term on the right side of \eqref{eq:pf-thm-PL-eq2} has order $O(\frac{1}{NT})$.
\end{Rem}

Recall that if $f(\mb{x})$ is strongly convex with modulus $\mu$, then it satisfies Assumption \ref{ass:P-L} with the same $\mu$ by Fact \ref{fact:sc-imply-pl}. Furthermore, if $f(\mb{x})$ is strongly convex with modulus $\mu>0$, we know problem \eqref{eq:sto-opt} has a unique minimizer $\mb{x}^\ast$ and $\norm{\mb{x} - \mb{x}^\ast}^2 \leq \frac{2}{\mu} (f(\mb{x}) - f(\mb{x}^\ast))$ for any $\mb{x}$. (See e.g. Fact \ref{fact:sc-imply-qg} in Supplement \ref{app:basic}.) Thus, we have the following corollary for Theorem \ref{thm:PL-rate}.

\begin{Cor}\label{cor:sc-rate}
	Consider problem \eqref{eq:sto-opt} under Assumptions \ref{ass:basic} where $f(\mb{x})$ is strongly convex with modulus $\mu>0$. Under the same conditions in Theorem \ref{thm:PL-rate},  the final output $\mb{x}_t$ returned by Algorithm \ref{alg:new} satisfies 
	\begin{align}
	\mbb{E}[\norm{\mb{x}_t - \mb{x}^\ast}^2] \leq & \frac{2c_1(f(\mb{x}_{1}) - f(\mb{x}^\ast))}{\mu T^{1+\delta}} + \frac{2c_2}{\mu NT} \nonumber\\ =&  O(\frac{1}{T^{1+\delta}}) + O(\frac{1}{NT})
	\end{align}
	where  $\delta, c_1, c_2$ are positive constants defined in Theorem \ref{thm:PL-rate} and $\mb{x}^\ast$ is the unique minimizer of problem \eqref{eq:sto-opt}. 
\end{Cor}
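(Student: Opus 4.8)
The plan is to obtain this as an immediate corollary of Theorem~\ref{thm:PL-rate} by chaining two standard facts about strongly convex functions. First I would check that Theorem~\ref{thm:PL-rate} is applicable under the stated hypotheses: by Fact~\ref{fact:sc-imply-pl}, a smooth $f$ that is strongly convex with modulus $\mu>0$ satisfies the P-L condition of Assumption~\ref{ass:P-L} with the same modulus $\mu$. Hence Assumptions~\ref{ass:basic}--\ref{ass:P-L} both hold, and the parameter restrictions $B_1\geq 2$, $\gamma<\frac{1}{L}$, $1<\rho<\frac{1}{1-\nu}$ with $\nu \defeq \frac{1}{2}\gamma\mu(1-L\gamma)$ are exactly those invoked in Theorem~\ref{thm:PL-rate}; note in particular that $\nu$ depends only on $\gamma,\mu,L$, so the constants $\delta, c_1, c_2$ are inherited verbatim. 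Applying Theorem~\ref{thm:PL-rate} to the output $\mb{x}_t$ of Algorithm~\ref{alg:new} gives
\begin{align*}
\mbb{E}[f(\mb{x}_t) - f^\ast] \leq \frac{c_1 (f(\mb{x}_{1}) - f^\ast)}{T^{1+\delta}} + \frac{c_2}{NT}.
\end{align*}

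Next I would use the quadratic growth property of strongly convex functions (Fact~\ref{fact:sc-imply-qg} in Supplement~\ref{app:basic}): since $f$ is strongly convex with modulus $\mu$, problem~\eqref{eq:sto-opt} has a unique minimizer $\mb{x}^\ast$, so $f^\ast = f(\mb{x}^\ast)$, and for every $\mb{x}$,
\begin{align*}
\norm{\mb{x} - \mb{x}^\ast}^2 \leq \frac{2}{\mu}\big(f(\mb{x}) - f(\mb{x}^\ast)\big).
\end{align*}
This is a deterministic pointwise inequality, so instantiating it at $\mb{x}=\mb{x}_t$ and taking expectations yields $\mbb{E}[\norm{\mb{x}_t - \mb{x}^\ast}^2] \leq \frac{2}{\mu}\,\mbb{E}[f(\mb{x}_t) - f(\mb{x}^\ast)]$ with no measurability or independence subtlety. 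Substituting the displayed bound from the previous paragraph (with $f^\ast = f(\mb{x}^\ast)$) gives
\begin{align*}
\mbb{E}[\norm{\mb{x}_t - \mb{x}^\ast}^2] \leq \frac{2c_1(f(\mb{x}_{1}) - f(\mb{x}^\ast))}{\mu T^{1+\delta}} + \frac{2c_2}{\mu NT},
\end{align*}
and the $O(1/T^{1+\delta}) + O(1/(NT))$ statement follows because $\mu, c_1, c_2, \delta$ and $f(\mb{x}_1)-f(\mb{x}^\ast)$ are all constants independent of $T$ and $N$.

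I do not expect any genuine obstacle here: the corollary is pure bookkeeping, translating the function-value guarantee of Theorem~\ref{thm:PL-rate} into an iterate guarantee via strong convexity. The only items deserving a line of care are (i) confirming that strong convexity supplies \emph{both} ingredients needed — the P-L condition (to invoke the theorem) and quadratic growth (to convert the rate) — and (ii) the elementary observation that, because quadratic growth holds for every realization of $\mb{x}_t$, the expectation passes through it directly.
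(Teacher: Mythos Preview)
Your proposal is correct and matches the paper's own argument essentially verbatim: the paper also invokes Fact~\ref{fact:sc-imply-pl} to verify Assumption~\ref{ass:P-L}, applies Theorem~\ref{thm:PL-rate}, and then uses the quadratic growth inequality $\norm{\mb{x}-\mb{x}^\ast}^2 \leq \frac{2}{\mu}(f(\mb{x})-f(\mb{x}^\ast))$ from Fact~\ref{fact:sc-imply-qg} to convert the function-value bound into the iterate bound. There is nothing to add or change.
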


\begin{Rem}
Recall that $O(1/T)$ convergence is optimal for stochastic strongly convex optimization \cite{book_Nemirovsky83,Rakhlin12ICML} over single node. Since the convergence of Algorithm \ref{alg:new} scales out perfectly with respect to the number of involved workers and strongly convex functions are a subclass of functions satisfying the P-L condition, we can conclude the $O(\frac{1}{NT})$ convergence attained by Algorithm \ref{alg:new} is optimal for parallel stochastic optimization under the P-L condition. It is also worth noting that we consider general stochastic optimization \eqref{eq:sto-opt} such that acceleration techniques developed for finite sum optimization, e.g., variance reduction, are excluded from consideration.
\end{Rem}

\section{General Non-Convex  Minimization}

Let $f(\mb{x})$ be the (stochastic) objective function in problem \eqref{eq:sto-opt}. For any given fixed $\mb{y}$, define a new function with respect to $\mb{x}$ given by 
\begin{align}
h_\theta(\mb{x}; \mb{y}) \defeq f(\mb{x}) + \frac{\theta}{2}\norm{\mb{x} - \mb{y}}^2 \label{eq:h-theta}
\end{align}
It is easy to verify that if $f(\mb{x})$ is smooth with modulus $L$ and $\theta > L$, then $h_\theta(\mb{x}; \mb{y})$ is both smooth with modulus $\theta+L$ and strongly convex with modulus $\theta -L > 0$. Furthermore, if $\nabla F(\mb{x}; \zeta)$ are unbiased i.i.d. stochastic gradients of function $f(\cdot)$  with a variance bounded by $\sigma^2$, then $\nabla F(\mb{x}; \zeta) + \theta(\mb{x} - \mb{y})$ are unbiased i.i.d. stochastic gradients of $h_\theta(\mb{x}; \mb{y})$ with the same variance.

\begin{algorithm}[tb]
	\caption{ $\text{CR-PSGD-Catalyst}(f, N, T, \mb{y}_0, B_1, \rho, \gamma)$}\label{alg:new-catalyst}
	\begin{algorithmic}[1]
		\STATE {\bf Input:} $N$, $T$, $\theta$, $\mb{y}_0 \in \mathbb{R}^m$, $\gamma $ , $B_1$ and  $\rho > 1$.
		\STATE Initialize $\mb{y}^{(0)} = \mb{y}_0$  and $k=1$.
		\WHILE{$k \leq \lfloor \sqrt{NT} \rfloor$}
		\STATE Define $h_\theta(\mb{x}; \mb{y}^{(k-1)})$ using \eqref{eq:h-theta}. Update $\mb{y}^{(k)}$ via
		{\scriptsize
		\begin{align*}
		\mb{y}^{(k)} = \text{CR-PSGD}(h_\theta(\cdot; \mb{y}^{(k-1)}), N, \lfloor \sqrt{T/N} \rfloor, \mb{y}^{(k-1)}, B_1, \rho, \gamma)
		\end{align*}
		}%
		\STATE Update $k \leftarrow k+1$.
		\ENDWHILE
	\end{algorithmic}
\end{algorithm}

Now consider Algorithm \ref{alg:new-catalyst} that wraps CR-PSGD with an outer-loop that updates  $h_\theta(\mb{x}; \mb{y}^{(k-1)})$ and applies CR-PSGD to minimize it. Note that $h_\theta(\mb{x}; \mb{y}^{(k-1)})$ augments the objective function $f(\mb{x})$ with an iteratively updated proximal term $\frac{\theta}{2}\norm{\mb{x} - \mb{y}^{(k-1)}}^2$. The introduction of proximal terms $\frac{\theta}{2}\norm{\mb{x} - \mb{y}^{(k-1)}}^2$ is inspired by earlier works \cite{Guler92SIOPT,He12JOTA,Salzo12JCA,Lin15NIPS,YuNeely17SIOPT, Davis17ArXiv, Paquette18AISTATS} on proximal point methods, which solve an minimization problem by solving a sequence of auxiliary problems involving a quadratic proximal term.  By choosing $\theta > L$ in \eqref{eq:h-theta}, we can ensure $h_\theta(\mb{x}; \mb{y}^{(k-1)})$ is both smooth and strongly convex.  For strongly convex $h_\theta(\mb{x}; \mb{y}^{(k-1)})$, Theorem \ref{thm:PL-rate} and Corollary \ref{cor:sc-rate} show that $\text{CR-PSGD}(N, \lfloor \sqrt{T/N} \rfloor, \mb{y}^{(k-1)}, B_1, \rho, \gamma)$ can return an $O(\frac{1}{\sqrt{NT}})$ approximated minimizer with only $O(\log(\frac{T}{N}))$ communication rounds. The ultimate goal of the proximal point like outer-loop introduced in Algorithm \ref{alg:new-catalyst} is to lift the "communication reduction" property from CR-PSGD for non-convex minimization under the restrictive PL condition to solve general non-convex minimization with reduced communication. Our method shares a similar philosophy with the ``catalyst acceleration" in \cite{Lin15NIPS} which also uses a ``proximal-point" outer-loop to achieve improved convergence rates for convex minimization by lifting fast convergence from strong convex minimization. In this perspective, we call Algorithm \ref{alg:new-catalyst}  ``CR-PSGD-Catalyst" by borrowing the word ``catalyst" from  \cite{Lin15NIPS}.  While both Algorithm \ref{alg:new-catalyst} and ``catalyst acceleration" use an proximal point outer-loop to lift desired algorithmic properties from specific problems to generic problems, they are different in the following two aspects:
\begin{itemize}
	\item The ``catalyst acceleration" in \cite{Lin15NIPS,Paquette18AISTATS} is developed to accelerate a wide range of first-order deterministic minimization, e.g., gradient based methods and their randomized variants such as SAG, SAGA, SDCA, SVRG, for both convex and non-convex cases. In particular, it requires the existence of a subprocedure with linear convergence for strongly convex minimization. It is remarked in \cite{Lin15NIPS} that  whether ``catalyst" can accelerate stochastic gradient based methods for stochastic minimization in the sense of \cite{Nemirovski09SIOPT}\footnote{For finite sum minimization, it is possible to develop linearly converging solvers by using  techniques such as variance reduction. However, for general strongly convex stochastic minimization, it is in general impossible to develop linearly converging stochastic gradient based solver and the fastest possible convergence is $O(1/T)$ \cite{Rakhlin12ICML,Hazan14JMLR, Lacoste12}. That is, stochastic minimization fundamentally fails to satisfy the prerequisite in \cite{Lin15NIPS,Paquette18AISTATS}.} remains unclear.  In contrast, our CR-PSGD-Catalyst can solve general stochastic minimization, which does not necessarily have a finite sum form, with i.i.d. stochastic gradients.  The used CR-PSGD subprocedure that is different from linear converging subprocedure used in \cite{Lin15NIPS,Paquette18AISTATS}. 
	\item The ``proximal point" outer loop used in ``catalyst acceleration" is solely to accelerate convergence \cite{Lin15NIPS,Paquette18AISTATS}. In contrast, the ``proximal point" outer loop used in our CR-PSGD-Catalyst provides convergence acceleration and communication reduction simultaneously.  Our analysis is also significantly different from analyses for conventional ``catalyst acceleration". 
\end{itemize}
Since each call of CR-PSGD in Algorithm \ref{alg:new-catalyst} requires only $O(\log(\frac{T}{N}))$ inter-worker communication rounds and there are $\sqrt{NT}$ calls of CR-PSGD,  it is easy to see CR-PSGD-Catalyst in total uses $O(\sqrt{NT}\log(\frac{T}{N}))$ communication rounds. The $O(\sqrt{NT}\log(\frac{T}{N}))$ communication complexity of CR-PSGD-Catalyst for general non-convex stochastic optimization is significantly less than the $O(T)$ communication complexity attained by PSGD \cite{Dekel12JMLR,GhadimiLan13SIOPT, Lian15NIPS} or the $O(N^{3/4}T^{3/4})$ communication complexity required by local SGD\footnote{For non-convex optimization, local SGD is more widely known as periodic model averaging or parallel restarted SGD since each worker periodically restarts its independent SGD procedure with a new initial point that is the average of all individual models \cite{Yu18ArXivAAAI,WangJoshi18ArXiv, Jiang18NIPS}.} \cite{Yu18ArXivAAAI}. The next theorem summarizes that our CR-PSGD-Catalyst can achieve the fastest known $O(1/\sqrt{NT})$ convergence that is previously attained by the PSGD or local SGD.

\begin{Thm} \label{thm:nonconvex-rate}
	Consider problem \eqref{eq:sto-opt} under Assumption \ref{ass:basic}.  If we choose $\theta > L$, $B_1 \geq 2$, $\gamma < \frac{1}{\theta + L}$ and $1<\rho < \frac{1}{1-\nu}$, where $\nu\defeq \frac{1}{2}\gamma (\theta-L)(1- (\theta +L)\gamma)$, in Algorithm \ref{alg:new-catalyst} and if $T \geq \max\{N, N\big(\frac{4c_1 (\theta+L)^2}{(\theta-L)^2}\big)^{\frac{2}{1+\delta}}, N(c_1)^{\frac{2}{1+\delta}}\}$, then we have
	{\footnotesize
	\begin{align}
	\frac{1}{\sqrt{NT}}\sum_{k=1}^{\sqrt{NT}}\mbb{E}[\norm{\nabla f(\mb{y}^{(k)})}^2] = O(\frac{1}{\sqrt{NT}}) \nonumber
	\end{align}
	}%
	where $\{\mb{y}^{(k)}, k\geq 1\}$ are a sequence of solutions returned from the CR-PSGD subprocedure.
\end{Thm}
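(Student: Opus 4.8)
The plan is to view Algorithm~\ref{alg:new-catalyst} as an \emph{inexact proximal-point method} applied to $f$, and to control the accumulated inexactness through the gaps of the auxiliary subproblems rather than through the raw objective gap $f(\mb{y}^{(k)})-f^\ast$ (a naive multiplicative recursion for the latter would blow up over the $\lfloor\sqrt{NT}\rfloor$ outer iterations). For each outer index $k\ge 1$ I would fix the exact proximal point $\mb{z}^{(k)}\defeq\argmin{\mb{x}}h_\theta(\mb{x};\mb{y}^{(k-1)})$, which is well defined and unique since $\theta>L$ makes $h_\theta(\cdot;\mb{y}^{(k-1)})$ both $(\theta-L)$-strongly convex and $(\theta+L)$-smooth, together with the subproblem gap $G_k\defeq h_\theta(\mb{y}^{(k-1)};\mb{y}^{(k-1)})-\min_{\mb{x}}h_\theta(\mb{x};\mb{y}^{(k-1)})=f(\mb{y}^{(k-1)})-h_\theta(\mb{z}^{(k)};\mb{y}^{(k-1)})\ge0$. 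From the optimality identity $\nabla f(\mb{z}^{(k)})=\theta(\mb{y}^{(k-1)}-\mb{z}^{(k)})$ and the strong-convexity lower bound $G_k\ge\frac{\theta-L}{2}\norm{\mb{y}^{(k-1)}-\mb{z}^{(k)}}^2$, the inequality I will reuse is
\begin{align}
\norm{\nabla f(\mb{z}^{(k)})}^2=\theta^2\norm{\mb{y}^{(k-1)}-\mb{z}^{(k)}}^2\le\frac{2\theta^2}{\theta-L}\,G_k. \nonumber
\end{align}

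Next I would bound the inexactness $\norm{\mb{y}^{(k)}-\mb{z}^{(k)}}^2$. Conditioned on the randomness generated before the $k$-th call, $h_\theta(\cdot;\mb{y}^{(k-1)})$ is a fixed strongly convex and smooth function with unbiased i.i.d.\ stochastic gradients of variance at most $\sigma^2$ (as noted just before Algorithm~\ref{alg:new-catalyst}), so Corollary~\ref{cor:sc-rate} applies with modulus $\theta-L$, horizon $\widehat T\defeq\lfloor\sqrt{T/N}\rfloor$ and $N$ workers, and the initial suboptimality appearing there is exactly $G_k$; taking total expectations gives
\begin{align}
D_k\defeq\mbb{E}\big[\norm{\mb{y}^{(k)}-\mb{z}^{(k)}}^2\big]\le a\,\mbb{E}[G_k]+b,\qquad a\defeq\frac{2c_1}{(\theta-L)\widehat T^{1+\delta}},\quad b\defeq\frac{2c_2}{(\theta-L)N\widehat T}, \nonumber
\end{align}
with $c_1,c_2,\delta$ the constants of Theorem~\ref{thm:PL-rate} under the substitution $(\mu,L)\mapsto(\theta-L,\theta+L)$. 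I would then derive an inexact-descent inequality: since $\nabla h_\theta(\mb{z}^{(k)};\mb{y}^{(k-1)})=0$, the $(\theta+L)$-smoothness of $h_\theta$ gives $h_\theta(\mb{y}^{(k)};\mb{y}^{(k-1)})\le\min_{\mb{x}}h_\theta(\mb{x};\mb{y}^{(k-1)})+\frac{\theta+L}{2}\norm{\mb{y}^{(k)}-\mb{z}^{(k)}}^2$, and using $h_\theta(\mb{y}^{(k)};\mb{y}^{(k-1)})\ge f(\mb{y}^{(k)})$ and $\min_{\mb{x}}h_\theta(\mb{x};\mb{y}^{(k-1)})=f(\mb{y}^{(k-1)})-G_k$ this rearranges to
\begin{align}
G_k\le\big(f(\mb{y}^{(k-1)})-f(\mb{y}^{(k)})\big)+\frac{\theta+L}{2}\norm{\mb{y}^{(k)}-\mb{z}^{(k)}}^2. \nonumber
\end{align}

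Summing this over $k=1,\dots,\lfloor\sqrt{NT}\rfloor$, telescoping, using $f(\mb{y}^{(\lfloor\sqrt{NT}\rfloor)})\ge f^\ast$, taking expectations, and substituting the inexactness bound yields the self-referential inequality $\sum_k\mbb{E}[G_k]\le(f(\mb{y}_0)-f^\ast)+\frac{\theta+L}{2}\big(a\sum_k\mbb{E}[G_k]+b\lfloor\sqrt{NT}\rfloor\big)$. This is where the hypotheses on $T$ are spent: $T\ge N\big(\tfrac{4c_1(\theta+L)^2}{(\theta-L)^2}\big)^{2/(1+\delta)}$ forces $\widehat T^{1+\delta}$ large enough that $\frac{(\theta+L)a}{2}\le\frac14$, so the $\sum_k\mbb{E}[G_k]$ term on the right can be absorbed; $T\ge N$ makes $\widehat T\ge\frac12\sqrt{T/N}$, hence $N\widehat T=\Theta(\sqrt{NT})$ and $b\lfloor\sqrt{NT}\rfloor=O(1)$; and $T\ge N(c_1)^{2/(1+\delta)}$ keeps $a=O(1)$. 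Thus $\sum_k\mbb{E}[G_k]=O(1)$, and then $\sum_k D_k\le a\sum_k\mbb{E}[G_k]+b\lfloor\sqrt{NT}\rfloor=O(1)$. To finish, combining $\norm{\nabla f(\mb{y}^{(k)})}^2\le 2\norm{\nabla f(\mb{z}^{(k)})}^2+2L^2\norm{\mb{y}^{(k)}-\mb{z}^{(k)}}^2\le\frac{4\theta^2}{\theta-L}G_k+2L^2\norm{\mb{y}^{(k)}-\mb{z}^{(k)}}^2$ with the two displays above and summing gives $\sum_{k=1}^{\lfloor\sqrt{NT}\rfloor}\mbb{E}[\norm{\nabla f(\mb{y}^{(k)})}^2]\le\frac{4\theta^2}{\theta-L}\sum_k\mbb{E}[G_k]+2L^2\sum_k D_k=O(1)$, and dividing by $\lfloor\sqrt{NT}\rfloor=\Theta(\sqrt{NT})$ gives the stated $O(1/\sqrt{NT})$ bound.

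The step I expect to be the crux is the self-referential inequality. The tempting shortcut of unrolling a recursion of the form $\mbb{E}[f(\mb{y}^{(k)})-f^\ast]\le(1+a)\mbb{E}[f(\mb{y}^{(k-1)})-f^\ast]+O(b)$ accumulates a factor $(1+a)^{\Theta(\sqrt{NT})}$ that is not $O(1)$ and destroys the rate; the remedy is to telescope the subproblem gaps $G_k$, whose total is pinned by $f(\mb{y}_0)-f^\ast$ plus the \emph{summable} inexactness, and the delicate part is verifying that exactly the three thresholds on $T$ stated in the theorem make the feedback coefficient small while keeping the residual terms $O(1)$.
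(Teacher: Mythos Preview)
Your argument is correct and takes a genuinely different route from the paper's. The paper never introduces the exact proximal point $\mb{z}^{(k)}$ or the gap $G_k$; instead it works directly with $\norm{\mb{y}^{(k)}-\mb{y}^{(k-1)}}^2$ and $\nabla h_\theta(\mb{y}^{(k)};\mb{y}^{(k-1)})$. Concretely, the paper (i) uses the threshold $T\ge N c_1^{2/(1+\delta)}$ to collapse the contraction factor in Theorem~\ref{thm:PL-rate} to at most $1$, obtaining the clean inexact descent $\mbb{E}[h_\theta(\mb{y}^{(k)};\mb{y}^{(k-1)})]\le\mbb{E}[h_\theta(\mb{y}^{(k-1)};\mb{y}^{(k-1)})]+c_2/\sqrt{NT}$, which after expanding $h_\theta$ gives $\mbb{E}[\norm{\mb{y}^{(k)}-\mb{y}^{(k-1)}}^2]\le\tfrac{2}{\theta}\mbb{E}[f(\mb{y}^{(k-1)})-f(\mb{y}^{(k)})]+O(1/\sqrt{NT})$ and telescopes directly; and (ii) separately bounds $\norm{\nabla h_\theta(\mb{y}^{(k)};\mb{y}^{(k-1)})}^2$ by sandwiching $h_\theta(\mb{y}^{(k)})-\min h_\theta$ between Fact~\ref{fact:smooth-grad-norm-ub} below and a smoothness-plus-error-bound estimate above, then absorbing the $\norm{\nabla h_\theta}^2$ term \emph{per iteration} via the other threshold, and finally passes to $\nabla f$ through $\nabla f(\mb{y}^{(k)})=\nabla h_\theta(\mb{y}^{(k)};\mb{y}^{(k-1)})-\theta(\mb{y}^{(k)}-\mb{y}^{(k-1)})$. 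Your approach replaces all of this with a single summed self-referential inequality for $\sum_k\mbb{E}[G_k]$ and one absorption after summing, which is closer in spirit to the classical inexact proximal-point/Moreau-envelope analysis and arguably tidier; the paper's route, on the other hand, avoids the auxiliary objects $\mb{z}^{(k)},G_k,D_k$ altogether. One small observation: in your argument the third threshold $T\ge N c_1^{2/(1+\delta)}$ is actually unnecessary, since the second threshold already forces $\tfrac{(\theta+L)a}{2}\le\tfrac14$ and hence $a=O(1)$; the paper needs it because its two absorption steps are decoupled.
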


\begin{proof}
For simplicity, we assume $\sqrt{NT}$ and $\sqrt{T/N}$ are integers and hence $\lfloor \sqrt{NT} \rfloor = \sqrt{NT}$ and $\lfloor \sqrt{T/N} \rfloor = \sqrt{T/N}$. This can be be ensured when $T = N^3 q^2$ where $q$ is any integer.  In general,  even if $\sqrt{TN}$ or $\sqrt{T/N}$ are non-integers, by using the fact that $\frac{1}{2}z \leq \lfloor z \rfloor \leq z$ for any $z\geq 2$, the same order of convergence can be easily extended to the case when $\sqrt{NT}$ or $\sqrt{T/N}$ are non-integers.

Fix $k\geq 1$ and consider stochastic minimization $\min_{\mb{x}\in \mbb{R}^m} h_\theta(\mb{x}; \mb{y}^{(k-1)})$. Since $h_\theta(\mb{x}; \mb{y}^{(k-1)})$ is strongly convex with modulus $\theta - L>0$, we know $h_\theta(\mb{x}; \mb{y}^{(k-1)})$ also satisfies the P-L condition with modulus $\theta-L$ by Fact \ref{fact:sc-imply-pl}. At the same time, $h_\theta(\mb{x}; \mb{y}^{(k-1)})$ is smooth with modulus $\theta + L$.  Note that our selections of $B_1, \gamma$ and $\rho$ satisfy the condition in Theorem \ref{thm:PL-rate} for stochastic minimization under the P-L condition. Denote $\mb{y}^{(k)}_\ast \defeq \argmin{\mb{x}\in \mbb{R}^m} \{h_\theta(\mb{x}; \mb{y}^{(k-1)})\}.$

Recall that $\mb{y}^{(k)}$ is the solution returned from CR-PSGD with $\sqrt{T/N}$ iterations. By Theorem \ref{thm:PL-rate}, we have 
{\small
\begin{align}
&\mbb{E}[h_\theta(\mb{y}^{(k)}; \mb{y}^{(k-1)}) - h_\theta(\mb{y}^{(k)}_\ast; \mb{y}^{(k-1)})] \nonumber \\\leq&  \frac{c_1}{(\frac{T}{N})^{\frac{1+\delta}{2}}} \mbb{E}[h_\theta(\mb{y}^{(k-1)}; \mb{y}^{(k-1)}) - h_\theta(\mb{y}^{(k)}_\ast; \mb{y}^{(k-1)})]+ \frac{c_2}{\sqrt{NT}} \label{eq:pf-thm-catalyst-rate-eq1}
\end{align}
}%
where  $\delta \defeq \log_{\rho}(\frac{1}{1-\nu}) -1 >0$, $c_1 \defeq  \frac{1}{1-\nu} \big(\frac{B_1}{\rho -1}\big)^{1+\delta}$, and $c_2 \defeq \frac{\rho^2\gamma(2-(\theta+L)\gamma))\sigma^2}{(1 - (1-\nu)\rho)(\rho - 1)}$ are absolute constants independent of $T$.

Since  $h_\theta(\cdot; \mb{y}^{(k-1)})$  is smooth with modulus $\theta + L$ and $\mb{y}^{(k)}_\ast$ minimizes it, by Fact \ref{fact:smooth-grad-norm-ub} (in Supplement \ref{app:basic}), we have
\begin{align}
&\frac{1}{2(\theta + L)}\norm{\nabla h_\theta(\mb{y}^{(k)}; \mb{y}^{(k-1)})}^2 \nonumber \\
\leq& h_\theta(\mb{y}^{(k)}; \mb{y}^{(k-1)}) - h_\theta(\mb{y}^{(k)}_\ast; \mb{y}^{(k-1)})\label{eq:pf-thm-catalyst-rate-eq2}
\end{align}

One the other hand ,we also have 
{\small
\begin{align}
&h_\theta(\mb{y}^{(k-1)}; \mb{y}^{(k-1)}) - h_\theta(\mb{y}^{(k)}_\ast; \mb{y}^{(k-1)}) \nonumber\\
\overset{(a)}{\leq}& \frac{\theta+L}{2}\norm{\mb{y}^{(k-1)} -\mb{y}^{(k)}_\ast }^2 \nonumber\\
\overset{(b)}{\leq} & (\theta + L)\norm{\mb{y}^{(k)} - \mb{y}^{(k)}_\ast}^2 + (\theta+L)\norm{\mb{y}^{(k)} - \mb{y}^{(k-1)}}^2 \nonumber\\
\overset{(c)}{\leq} & \frac{\theta + L}{(\theta -L)^2} \norm{\nabla h_\theta(\mb{y}^{(k)}; \mb{y}^{(k-1)})}^2+ (\theta+L)\norm{\mb{y}^{(k)} - \mb{y}^{(k-1)}}^2  \label{eq:pf-thm-catalyst-rate-eq3}
\end{align}
}%
where (a) follows from Fact \ref{fact:smooth-obj-diff-ub} (in Supplement \ref{app:basic}) by recalling again that $h_\theta(\cdot; \mb{y}^{(k-1)})$  is smooth with modulus $\theta + L$ and $\mb{y}^{(k)}_\ast$ minimizes it; (b) follows because $\norm{\mb{y}^{(k-1)} -\mb{y}^{(k)}_\ast }^2 \leq 2 \norm{\mb{y}^{(k)} - \mb{y}^{(k)}_\ast}^2 + 2\norm{\mb{y}^{(k)} - \mb{y}^{(k-1)}}^2$, which further follows by applying basic inequality $\norm{\mb{u}-\mb{v}}^2  \leq 2\norm{\mb{u}}^2 + 2\norm{\mb{v}}^2$ with $\mb{u} = \mb{y}^{(k)} - \mb{y}^{(k)}_\ast$ and $\mb{v} = \mb{y}^{(k)} - \mb{y}^{(k-1)}$; and (c) follows because $\norm{\mb{y}^{(k)} -\mb{y}^{(k)}_\ast }^2 \leq \frac{1}{(\theta -L)^2} \norm{\nabla h_\theta(\mb{y}^{(k)}; \mb{y}^{(k-1)})}^2$, which further follows from by Fact \ref{fact:sc-imply-eb} (in Supplement \ref{app:basic}) by noting that $h_\theta(\cdot; \mb{y}^{(k-1)})$  is strongly convex with modulus $\theta - L$ and $\mb{y}^{(k)}_\ast$ minimizes it.

Substituting \eqref{eq:pf-thm-catalyst-rate-eq2} and \eqref{eq:pf-thm-catalyst-rate-eq3} into \eqref{eq:pf-thm-catalyst-rate-eq1} and rearranging terms yields
{\footnotesize
\begin{align}
&\Big(\underbrace{\frac{1}{2(\theta+L)} - \frac{c_1(\theta +L)}{(\theta-L)^2} \frac{1}{(\frac{T}{N})^{\frac{1+\delta}{2}}}}_{\defeq \alpha} \Big) \mbb{E}[\norm{\nabla h_\theta(\mb{y}^{(k)}; \mb{y}^{(k-1)})}^2] \nonumber\\
\leq&  \frac{c_1(\theta +L)}{(\frac{T}{N})^{1+\delta}} \mbb{E}]\norm{\mb{y}^{(k)} - \mb{y}^{(k-1)}}^2] + \frac{c_2}{\sqrt{NT}} \label{eq:pf-thm-catalyst-rate-eq4}
\end{align}
}%

Note that $T \geq N\big(\frac{4c_1 (\theta+L)^2}{(\theta-L)^2}\big)^{\frac{2}{1+\delta}}$ ensures  the term marked by an underbrace in \eqref{eq:pf-thm-catalyst-rate-eq4} satisfies $\alpha \geq \frac{1}{4(\theta+L)}$. Thus, \eqref{eq:pf-thm-catalyst-rate-eq4} implies that
\begin{align}
&\frac{1}{4(\theta +L)} \mbb{E}[\norm{\nabla h_\theta(\mb{y}^{(k)}; \mb{y}^{(k-1)})}^2] \nonumber\\
 \leq&  \frac{c_1(\theta +L)}{(\frac{T}{N})^{1+\delta}} \mbb{E}]\norm{\mb{y}^{(k)} - \mb{y}^{(k-1)}}^2] + \frac{c_2}{\sqrt{NT}} \label{eq:pf-thm-catalyst-rate-eq5}
\end{align}

By the definition of $h_\theta(\cdot; \mb{y}^{(k-1)})$, we have $\nabla h_\theta(\mb{y}^{(k)}; \mb{y}^{(k-1)}) = \nabla f(\mb{y}^{(k)}) + \theta  (\mb{y}^{(k)} - \mb{y}^{(k-1)})$. This implies that  
{\small
\begin{align}
\norm{ \nabla f(\mb{y}^{(k)})}^2 \leq &2 \norm{\nabla h_\theta(\mb{y}^{(k)}; \mb{y}^{(k-1)}) }^2 + 2\theta^2 \norm{\mb{y}^{(k)} - \mb{y}^{(k-1)}}^2 \label{eq:pf-thm-catalyst-rate-eq6}
\end{align}
}%
Combining  \eqref{eq:pf-thm-catalyst-rate-eq5} and \eqref{eq:pf-thm-catalyst-rate-eq6} yields
{\scriptsize
\begin{align}
&\mbb{E}[\norm{\nabla f(\mb{y}^{(k)})}^2]  \nonumber \\
\leq& \Big(\frac{8c_1 (\theta +L)^2}{(\frac{T}{N})^{1+\delta}}+ 2\theta^2 \Big) \mbb{E}[\norm{\mb{y}^{(k)} - \mb{y}^{(k-1)}}^2] + \frac{8c_2(\theta +L)}{\sqrt{NT}} \nonumber \\
\overset{(a)}{\leq}&  \Big( 8c_1 (\theta +L)^2+ 2\theta^2 \Big) \mbb{E}[\norm{\mb{y}^{(k)} - \mb{y}^{(k-1)}}^2] + \frac{8c_2(\theta +L)}{ \sqrt{NT}}\label{eq:pf-thm-catalyst-rate-eq7}
\end{align}
}%
where (a) follows because $(\frac{T}{N})^{1+\delta} \geq 1$ as long as $T\geq N$.
 
Since $T \geq Nc_1^{\frac{2}{1+\delta}}$ ensures $ \frac{c_1}{(\frac{T}{N})^{\frac{1+\delta}{2}}} \leq 1$, by \eqref{eq:pf-thm-catalyst-rate-eq1}, we have
 \begin{align}
 &\mbb{E}[h_\theta(\mb{y}^{(k)}; \mb{y}^{(k-1)}) - h_\theta(\mb{y}^{(k)}_\ast; \mb{y}^{(k-1)})] \nonumber \\
 \leq&  \mbb{E}[h_\theta(\mb{y}^{(k-1)}; \mb{y}^{(k-1)}) - h_\theta(\mb{y}^{(k)}_\ast; \mb{y}^{(k-1)})]+ \frac{c_2}{\sqrt{NT}} \label{eq:pf-thm-catalyst-rate-eq8}
 \end{align}
Cancelling the common term on both sides and substituting the definition of $h_\theta (\cdot; \mb{y}^{(k-1)})$ into \eqref{eq:pf-thm-catalyst-rate-eq8} yields

\begin{align}
&\mbb{E}[f(\mb{y}^{(k)}) + \frac{\theta}{2} \norm{\mb{y}^{(k)} - \mb{y}^{(k-1)}}^2] \nonumber \\
 \leq& \mbb{E}[f(\mb{y}^{(k-1)})] + \frac{c_2}{\sqrt{NT}}
\end{align}
Rewriting this inequality as $\mbb{E}[\norm{\mb{y}^{(k)} - \mb{y}^{(k-1)}}^2] \leq \frac{2}{\theta} \mbb{E}[f(\mb{y}^{(k-1)}) - f(\mb{y}^{(k)})] + \frac{2c_2}{\theta \sqrt{NT}}$ and substituting it into \eqref{eq:pf-thm-catalyst-rate-eq7} yields
{\small
\begin{align}
&\mbb{E}[\norm{\nabla f(\mb{y}^{(k)})}^2] \nonumber\\
\leq& \frac{2}{\theta}\Big(8c_1 (\theta +L)^2+ 2\theta^2 \Big)\mbb{E}[f(\mb{y}^{(k-1)}) - f(\mb{y}^{(k)})] \nonumber \\&+ \Big(\frac{16 c_1 (\theta +L)^2}{\theta}+ 12\theta + 8L\Big) \frac{c_2}{\sqrt{NT}}
\end{align}
}%

Summing this inequality over $k\in\{1, \ldots, \sqrt{NT}\}$ and dividing both sides by a factor $\sqrt{NT}$ yields
{\small
\begin{align}
&\frac{1}{\sqrt{NT}}\sum_{k=1}^{\sqrt{NT}}\mbb{E}[\norm{\nabla f(\mb{y}^{(k)})}^2] \nonumber\\
\leq& \frac{2}{\theta}\Big(8 c_1 (\theta +L)^2+ 2\theta^2 \Big)\frac{\mbb{E}[f(\mb{y}^{(0)}) - f(\mb{y}^{\sqrt{NT}})]}{\sqrt{NT}} \nonumber \\ &+ \Big(\frac{16c_1 (\theta +L)^2}{\theta }+ 12\theta+8L\Big) \frac{c_2}{\sqrt{NT}} \nonumber\\
\overset{(a)}{\leq} & \frac{2}{\theta}\Big(8c_1 (\theta +L)^2+ 2\theta^2 \Big)\frac{f(\mb{y}^{(0)}) - f^\ast}{\sqrt{NT}} \nonumber\\ &+ \Big(\frac{16c_1 (\theta +L)^2}{\theta}+ 12\theta+ 8L\Big) \frac{c_2}{\sqrt{NT}} \nonumber\\
= &O(\frac{1}{\sqrt{NT}})
\end{align}
}%
where (a) follows because $f^\ast$ is the global minimum of problem \eqref{eq:sto-opt}.

\end{proof}

\section{Experiments}
To validate the theory developed in this paper, we conduct two numerical experiments: (1) distributed logistic regression and (2) training deep neural networks.

\subsection{Distributed Logistic Regression}

Consider solving an $l_2$ regularized logistic regression problem using multiple parallel nodes. Let $(\mb{z}_{ij}, b_{ij})$ be the training pairs at node $i$, where$\mathbf{z}_{ij} \in  \mathbb{R}^{d}$ are $d$-dimension feature vectors and $b_{ij} \in \{-1,1\}$ are labels.  The problem can be cast as follows:
\begin{align}
\min_{\mb{x}\in \mbb{R}^d} &  ~ \frac{1}{N}\sum_{i=1}^{N} \frac{1}{M_{i}}\sum_{j=1}^{M_{i}}  \log(1+ \exp(b_{ij} (\mathbf{z}_{ij}\tran \mathbf{x}_{i})) + \frac{1}{2}\mu \norm{\mathbf{x}}^2 \label{eq:logstic-reg}
\end{align}
where $N$ is the number of parallel workers, $M_i$ are the number of training samples available at node $i$ and $\mu$ is the regularization coefficient.

Our experiment generates a problem instance with  $d=500$, $N=10$, $M_{i} = 10^{4}, \forall i\in\{1,2,\ldots,N\}$ and $\mu = 0.001$. The synthetic training feature vectors $\mathbf{z}_{ij}$ are generated from normal distribution $\mathcal{N}(\mb{I}, 4\mb{I}_d)$. Assume the underlying classification problem has a true weight vector $\mathbf{x}^{\text{true}}\in \mathbb{R}^{d}$ generated from a standard normal distribution and then generate the noisy labels $b_{ij} = \text{sign}(\mathbf{z}_{ij}\tran \mathbf{x}^{\text{true}}  + \xi_{i})$ where noise $\xi_{i}\sim \mathcal{N}(0, 1)$.  Note that the distributed logistic regression problem \eqref{eq:logstic-reg}  is strongly convex and hence satisfies Assumption \ref{ass:P-L}.   We run Algorithm \ref{alg:new-catalyst},  the classical parallel SGD, and ``local SGD" with communication skipping proposed in \cite{Stich18ArXiv} to solve problem \eqref{eq:logstic-reg}. For strongly  convex stochastic optimization, all these three methods are proven to achieve the fast $O(\frac{1}{NT})$ convergence. The communication complexity of these three methods are $O(\log(T))$, $O(T)$ and $O(\sqrt{NT})$, respectively. Our Algorithm \ref{alg:new} has the lowest communication complexity. In the experiment,  we choose $N=10$, $T=10000$, $\mb{x}_1 = \mb{0}$, $B_1 =2$, $\gamma = 0.1$ and $\rho = 1.1$ in Algorithm \ref{alg:new}; choose fixed batch size $2$ and learning rate $0.1$ in the classical parallel SGD; choose fixed batch size $2$,  learning rate $0.1$ and the largest communication skipping interval for which the loss at convergence does not sacrifice in local SGD. Figures \ref{fig:sc_sfo} and \ref{fig:sc_commu} plot the objective values of problem \eqref{eq:logstic-reg} versus the number of SFO access and the number of communication rounds, respectively.   Our numerical results verify that Algorithm \ref{alg:new} can achieve similar convergence as existing fastest parallel SGD variants with fewer communication rounds.

\begin{figure}[h!] 
\centering
\includegraphics[width=0.48\textwidth]{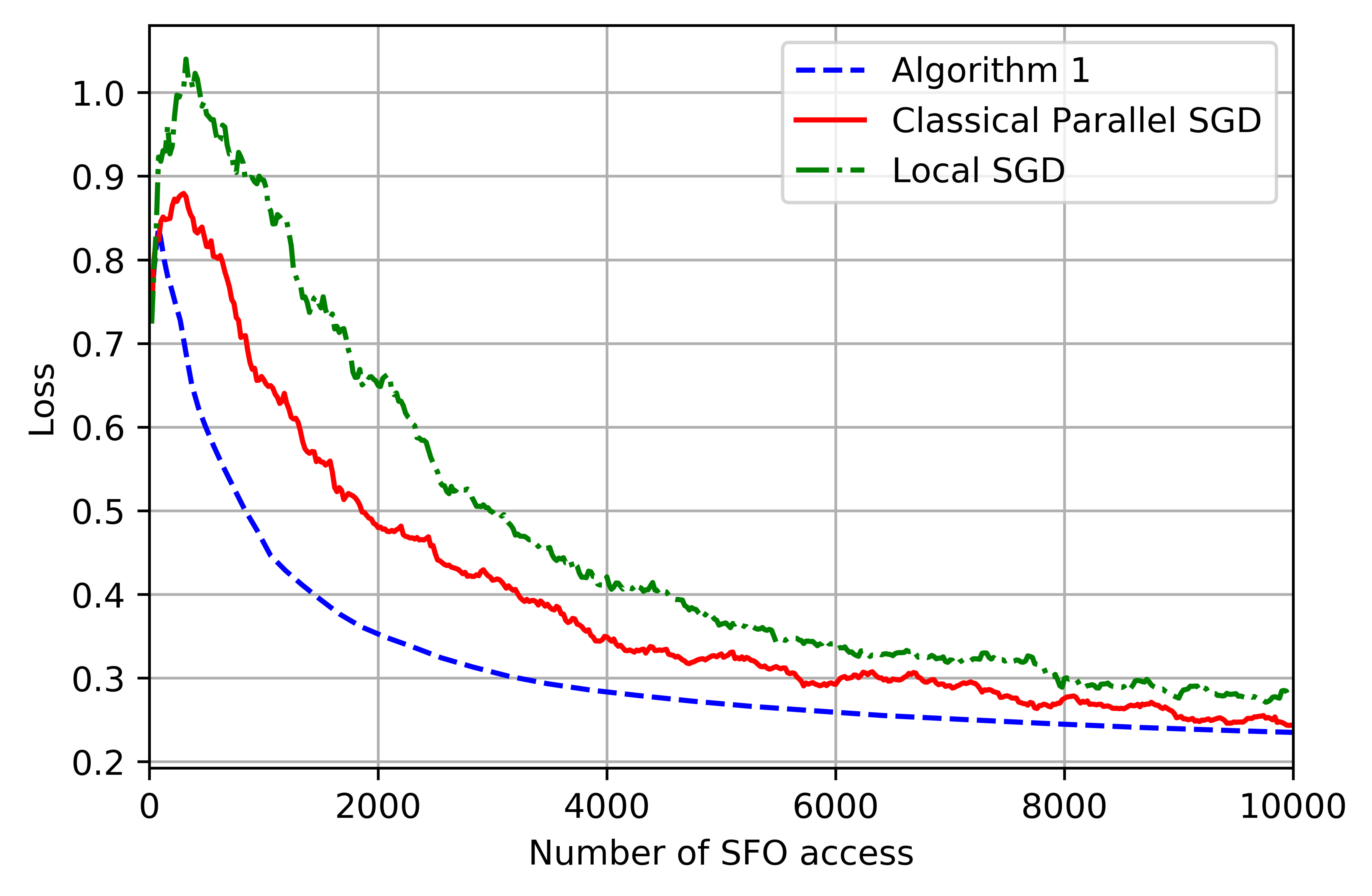}
\caption{Distributed logistic regression: loss v.s. number of SFO access.}\label{fig:sc_sfo}
\end{figure}
\begin{figure}[h!] 
\centering
\includegraphics[width=0.48\textwidth]{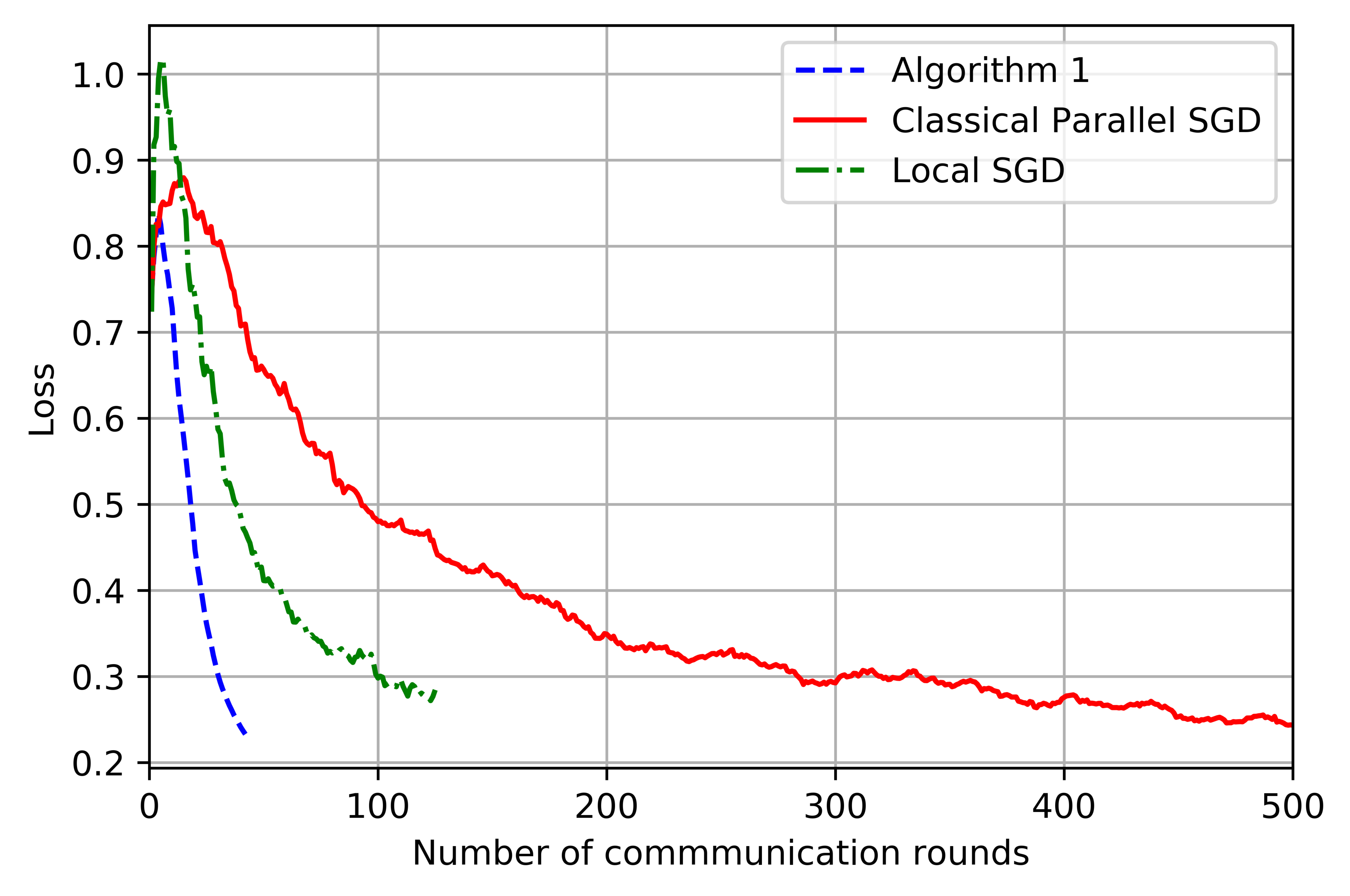}
\caption{Distributed logistic regression: loss v.s. number of communication rounds.}\label{fig:sc_commu}
\end{figure}

\subsection{Training Deep Neural Networks}
Consider using deep learning for the image classification over CIFAR-10 \cite{Krizhevsky09}. The loss function for deep neural networks is non-convex and typically violates Assumption \ref{ass:P-L}. We run Algorithm \ref{alg:new-catalyst},  the classical parallel SGD, and ``local SGD" with communication skipping in \cite{Stich18ArXiv, Yu18ArXivAAAI} to train ResNet20 \cite{He16CVPR} with $8$ GPUs.  It has been shown that the ``local SGD", also known as parallel restarted SGD or periodic model averaging, can linearly speed up the parallel training of deep neural networks with significantly less communication overhead than the classical parallel SGD \cite{Yu18ArXivAAAI,Lin18ArXiv,WangJoshi18ArXiv, Jiang18NIPS}.  For both parallel SGD and local SGD, the learning rate is $0.1$, the momentum is $0.9$, the weight decay is $1e-4$, and the batch size at each GPU is $32$.  For local SGD, we use the largest communication skipping interval for which the loss at convergence does not sacrifice.  For Algorithm \ref{alg:new-catalyst}, we use $B_1=32$, $\rho=1.02$ and $\gamma =0.1$. In our experiment, each iteration of Algorithm \ref{alg:new-catalyst} executes CR-PSGD (Algorithm \ref{alg:new}) to access one epoch of training data at each GPU. That is, the $T$ parameter in each call of Algorithm \ref{alg:new} is $50000$. The $B_\tau$ parameter in Algorithm \ref{alg:new} stop growing when it exceeds $512$.  

\begin{figure}[h!] 
\centering
\includegraphics[width=0.48\textwidth]{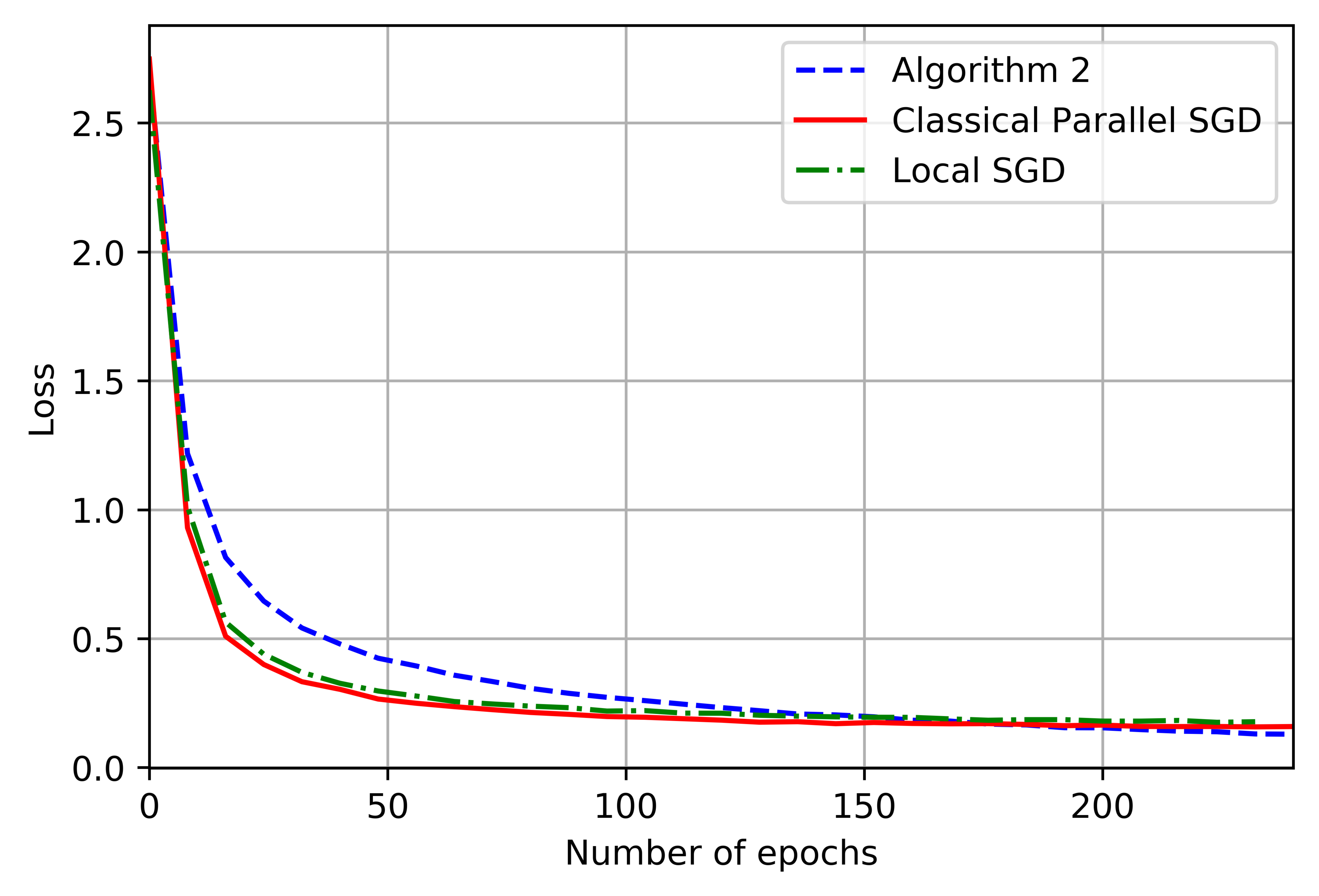}
\caption{Training deep neural networks: loss v.s. number of SFO access.}\label{fig:dl_sfo}
\end{figure}
\begin{figure}[h!] 
\centering
\includegraphics[width=0.48\textwidth]{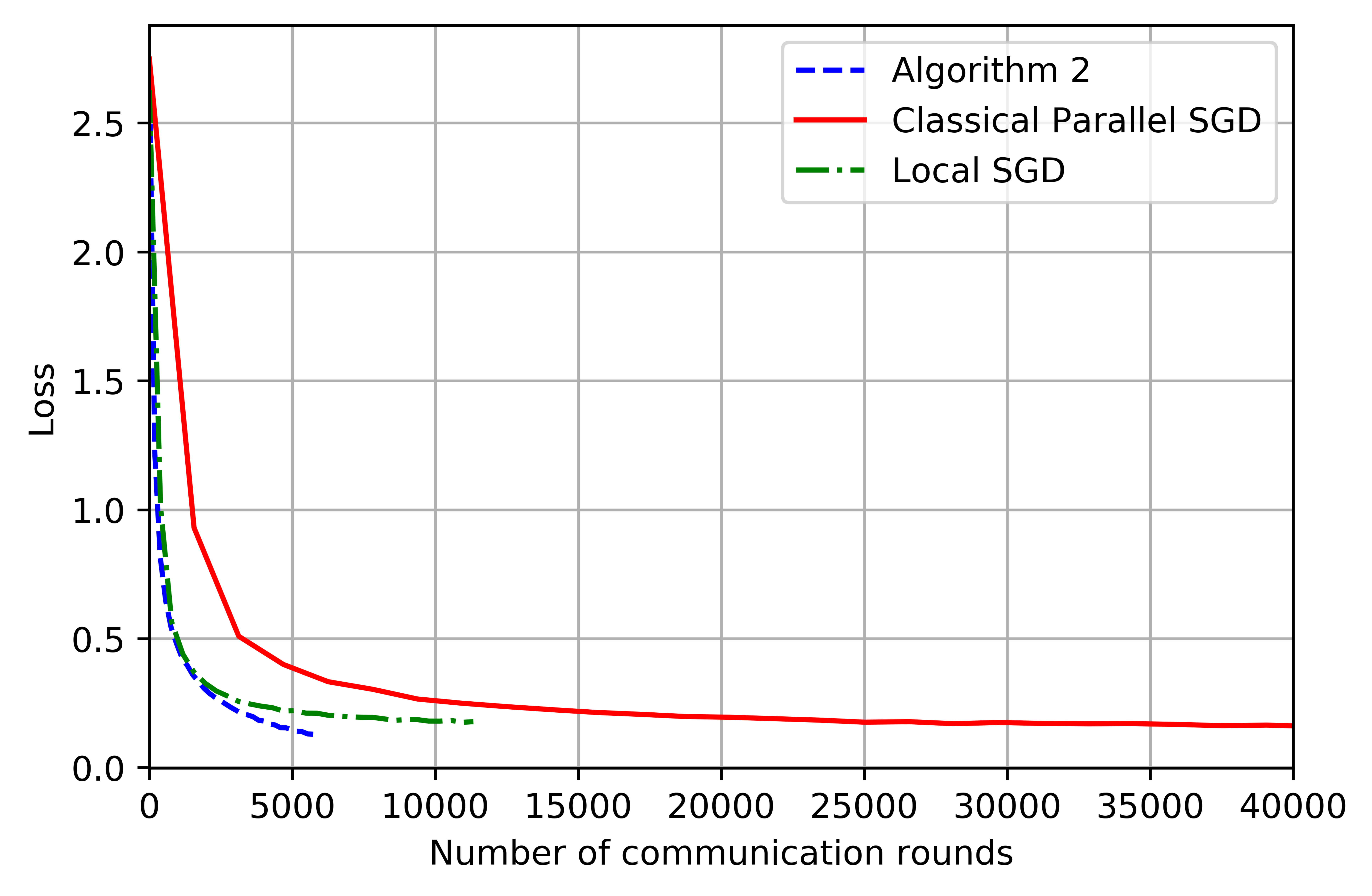}
\caption{Training deep neural networks: loss v.s. number of communication rounds.}\label{fig:dl_commu}
\end{figure}

\vspace{-1em}
\section{Conclusion}
In this paper, we explore the idea of using dynamic batch sizes for distributed non-convex optimization.  For non-convex optimization satisfying the Polyak-Lojasiewicz (P-L) condition, we show using exponential increasing batch sizes in parallel SGD as in Algorithm \ref{alg:new} can achieve $O(\frac{1}{NT})$ convergence using only $O(\log(T))$ communication rounds. For general stochastic non-convex optimization (without P-L condition), we propose a Catalyst-like algorithm that can achieve $O(\frac{1}{\sqrt{NT}})$ convergence with $O(\sqrt{TN}\log(\frac{T}{N}))$ communication rounds.

\bibliographystyle{icml2019} 
\bibliography{mybibfile}

\onecolumn
\section{Supplement}
\subsection{Basic Facts} \label{app:basic}

This section summarizes several well-known facts for smooth and/or strongly convex functions. For the convenience to the readers, we also provide  self-contained proofs to these facts. 

Recall that if $\phi(\mb{x})$ is a smooth function with modulus $L>0$, then we have $\phi(\mb{y}) \leq \phi(\mb{x}) + \inprod{\nabla \phi(\mb{x}), \mb{y}-\mb{x}} + \frac{L}{2}\norm{\mb{y}-\mb{x}}^2$ for any $\mb{x}$ and $ \mb{y}$. This property is known as the descent lemma for smooth functions, see e.g., Proposition A.24 in \cite{book_NonlinearProgramming_Bertsekas}.  The next two useful facts follow directly from the descent lemma.

\begin{Fact}\label{fact:smooth-obj-diff-ub}
	Let $\phi:\mbb{R}^m \mapsto \mbb{R}$ be a smooth function with modulus $L$. If $\mb{x}^\ast$ is a global minimizer of $f$ over $\mbb{R}^m$, then 
	\begin{align}
	\phi(\mb{x}) - \phi(\mb{x}^\ast) \leq \frac{L}{2}\norm{\mb{x} - \mb{x}^\ast}^2, \quad \forall \mb{x}
	\end{align}
\end{Fact}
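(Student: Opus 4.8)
The plan is to derive this directly from the descent lemma (the quadratic upper bound for smooth functions) by choosing a well-placed test point. First I would write down the descent lemma at the pair $(\mb{x}^\ast, \mb{x})$, i.e.
\begin{align*}
\phi(\mb{x}) \leq \phi(\mb{x}^\ast) + \inprod{\nabla \phi(\mb{x}^\ast), \mb{x} - \mb{x}^\ast} + \frac{L}{2}\norm{\mb{x} - \mb{x}^\ast}^2.
\end{align*}
The key observation is that since $\mb{x}^\ast$ is a global minimizer of $\phi$ over all of $\mbb{R}^m$ and $\phi$ is differentiable, first-order optimality gives $\nabla \phi(\mb{x}^\ast) = \mb{0}$, so the inner-product term vanishes. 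Rearranging the remaining inequality yields exactly $\phi(\mb{x}) - \phi(\mb{x}^\ast) \leq \frac{L}{2}\norm{\mb{x} - \mb{x}^\ast}^2$ for all $\mb{x}$.

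There is essentially no obstacle here; the only point worth a sentence of care is justifying $\nabla\phi(\mb{x}^\ast)=\mb{0}$, which is immediate because $\phi$ is smooth (hence differentiable) and $\mb{x}^\ast$ is an unconstrained global minimizer. I would state the proof in two lines: invoke the descent lemma, then kill the linear term using $\nabla\phi(\mb{x}^\ast)=\mb{0}$. No further casework or estimation is needed, and the constant $L/2$ comes out directly with no slack.
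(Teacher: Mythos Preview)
Your proposal is correct and matches the paper's proof essentially line for line: apply the descent lemma with base point $\mb{x}^\ast$, then use $\nabla\phi(\mb{x}^\ast)=\mb{0}$ from first-order optimality to drop the linear term.
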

\begin{proof}
	By the descent lemma for smooth functions, for any $\mb{x}$, we have 
	\begin{align*}
	\phi(\mb{x})  \leq& \phi(\mb{x}^\ast) + \inprod{\nabla \phi(\mb{x}^\ast), \mb{x} - \mb{x}^\ast} + \frac{L}{2}\norm{\mb{x} - \mb{x}^\ast}^2 \\
	\overset{(a)}{=} &\phi(\mb{x}^\ast) + \frac{L}{2}\norm{\mb{x} - \mb{x}^\ast}^2 
	\end{align*}
	where (a) follows from $\nabla \phi(\mb{x}^\ast) = \mb{0}$.
\end{proof}

\begin{Fact}\label{fact:smooth-grad-norm-ub}
	Let $\phi: \mbb{R}^m \rightarrow \mbb{R}$ be a smooth function with modulus $L$. We have 
	\begin{align}
	\frac{1}{2L}\norm{\nabla \phi(\mb{x})}^2 \leq \phi(\mb{x}) - \phi^\ast, \quad \forall \mb{x}
	\end{align}
	where $\phi^\ast$ is the global minimum of $\phi(\mb{x})$.
\end{Fact}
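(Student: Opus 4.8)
The final statement I need to prove is Fact \ref{fact:smooth-grad-norm-ub}: for a smooth function $\phi$ with modulus $L$, one has $\frac{1}{2L}\norm{\nabla \phi(\mb{x})}^2 \leq \phi(\mb{x}) - \phi^\ast$ for all $\mb{x}$. The plan is to invoke the descent lemma (already recalled in the excerpt just before Fact \ref{fact:smooth-obj-diff-ub}) and optimize the right-hand side over the free point. Concretely, fix $\mb{x}$ and apply the descent lemma with $\mb{y} = \mb{x} - \frac{1}{L}\nabla \phi(\mb{x})$, which is the standard gradient-descent step that exactly minimizes the quadratic upper model $\phi(\mb{x}) + \inprod{\nabla\phi(\mb{x}), \mb{y}-\mb{x}} + \frac{L}{2}\norm{\mb{y}-\mb{x}}^2$.

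The key steps, in order: First, write the descent lemma at this choice of $\mb{y}$: $\phi(\mb{y}) \leq \phi(\mb{x}) - \frac{1}{L}\norm{\nabla\phi(\mb{x})}^2 + \frac{1}{2L}\norm{\nabla\phi(\mb{x})}^2 = \phi(\mb{x}) - \frac{1}{2L}\norm{\nabla\phi(\mb{x})}^2$. Second, use $\phi^\ast \leq \phi(\mb{y})$, since $\phi^\ast$ is the global minimum. Chaining the two inequalities gives $\phi^\ast \leq \phi(\mb{x}) - \frac{1}{2L}\norm{\nabla\phi(\mb{x})}^2$, and rearranging yields the claim. No subtlety beyond these two lines.

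There is essentially no main obstacle here — the result is a textbook consequence of the descent lemma, and the only "choice" to make is the gradient step $\mb{y} = \mb{x} - \frac{1}{L}\nabla\phi(\mb{x})$, after which everything is an elementary substitution. The one point worth stating carefully is that the global minimum $\phi^\ast$ is assumed to be attained (or at least finite and bounded above by every function value), so that $\phi^\ast \le \phi(\mb{y})$ is legitimate; this is consistent with how the fact is used later (e.g., in \eqref{eq:pf-lm-eq2} and in the proof of Theorem \ref{thm:nonconvex-rate}, where $\phi$ is $f$ or a strongly convex $h_\theta$ with an attained minimizer).
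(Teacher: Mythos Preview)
Your proposal is correct and follows essentially the same approach as the paper: both apply the descent lemma and exploit the specific choice $\mb{y} = \mb{x} - \frac{1}{L}\nabla\phi(\mb{x})$ (the paper arrives at it by completing the square and minimizing over $\mb{y}$, you plug it in directly), then bound $\phi(\mb{y})$ below by $\phi^\ast$.
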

\begin{proof}
	By the descent lemma for smooth functions, for any $\mb{x}, \mb{y}\in \mbb{R}^n$, we have 
	\begin{align*}
	\phi(\mb{y}) \leq& \phi(\mb{x}) + \inprod{\nabla \phi(\mb{x}), \mb{y} - \mb{x}} + \frac{L}{2} \norm{\mb{y}-\mb{x}}^2\\
	\overset{(a)}{=}& \phi(\mb{x}) + \frac{L}{2} \norm{\mb{y}-\mb{x} + \frac{1}{L}\nabla \phi(\mb{x})}^2 - \frac{1}{2L}\norm{\nabla \phi(\mb{x})}^2
	\end{align*}
	where (a) can be verified by noting that $ \norm{\mb{y}-\mb{x} + \frac{1}{L}\nabla f(\mb{x})}^2 = \norm{\mb{y}-\mb{x} }^2 + \frac{2}{L}\inprod{\nabla \phi(\mb{x}), \mb{y}-\mb{x}}+ \frac{1}{L^2}\norm{\nabla \phi(\mb{x})}^2$.
	
	Minimizing both sides over $\mb{y}\in \mbb{R}^m $ yields
	\begin{align*}
	\phi^\ast \leq \phi(\mb{x})- \frac{1}{2L}\norm{\nabla \phi(\mb{x})}^2  
	\end{align*}
\end{proof}

Recall that if smooth function $\phi(\mb{x})$ is strongly convex with modulus $\mu>0$, then we have $\phi(\mb{y}) \geq \phi(\mb{x}) + \inprod{\nabla \phi(\mb{x}), \mb{y}-\mb{x}} + \frac{\mu}{2}\norm{\mb{y}-\mb{x}}^2$ for any $\mb{x}$ and $ \mb{y}$. The next two useful facts follow directly from this inequality.

\begin{Fact}\label{fact:sc-imply-qg}
	Let smooth function $\phi:\mbb{R}^m \mapsto \mbb{R}$ be strongly convex with modulus $\mu>0$. If $\mb{x}^\ast$ is the (unique) global minimizer of $f$ over $\mbb{R}^m$, then 
	\begin{align}
	\phi(\mb{x}) - \phi(\mb{x}^\ast) \geq \frac{\mu}{2}\norm{\mb{x} - \mb{x}^\ast}^2, \quad \forall \mb{x} \label{eq:sc-quadratic-growth}
	\end{align}
\end{Fact}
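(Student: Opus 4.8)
The plan is to derive the quadratic growth bound directly from the defining inequality of strong convexity recalled immediately before the statement, namely $\phi(\mb{y}) \geq \phi(\mb{x}) + \inprod{\nabla \phi(\mb{x}), \mb{y}-\mb{x}} + \frac{\mu}{2}\norm{\mb{y}-\mb{x}}^2$ for all $\mb{x}, \mb{y}$. This is the strongly convex counterpart of the descent lemma that drives the proof of Fact \ref{fact:smooth-obj-diff-ub}, and the argument here is essentially the mirror image of that one: there the descent lemma gives an upper bound with constant $L$, whereas strong convexity gives the matching lower bound with constant $\mu$.

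First I would instantiate the strong convexity inequality with the base point taken to be the minimizer and the free point taken to be the arbitrary argument, i.e. substitute $\mb{x} \leftarrow \mb{x}^\ast$ and $\mb{y} \leftarrow \mb{x}$. This yields $\phi(\mb{x}) \geq \phi(\mb{x}^\ast) + \inprod{\nabla \phi(\mb{x}^\ast), \mb{x}-\mb{x}^\ast} + \frac{\mu}{2}\norm{\mb{x}-\mb{x}^\ast}^2$. Next I would invoke the first-order optimality condition: since $\phi$ is differentiable on all of $\mbb{R}^m$ and $\mb{x}^\ast$ is a global minimizer, Fermat's rule gives $\nabla \phi(\mb{x}^\ast) = \mb{0}$, so the inner-product term vanishes. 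Rearranging the surviving inequality produces exactly the claimed bound $\phi(\mb{x}) - \phi(\mb{x}^\ast) \geq \frac{\mu}{2}\norm{\mb{x}-\mb{x}^\ast}^2$, valid for every $\mb{x}$.

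There is no genuine obstacle here: the result is immediate once the substitution is made in the correct direction and the gradient is annihilated at the optimum. The only point demanding a moment of care is the orientation of the substitution — the base point must be the minimizer, not the free point, since that is precisely what forces the first-order term to disappear; the reverse choice would leave an unwanted $\inprod{\nabla \phi(\mb{x}), \mb{x}^\ast - \mb{x}}$ term that cannot be controlled without further work. I would also remark that the uniqueness of $\mb{x}^\ast$ asserted in the hypothesis is not actually required for the displayed inequality; it is a separate consequence of strict convexity and is stated here only for completeness, so the proof need not appeal to it.
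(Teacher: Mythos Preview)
Your proposal is correct and follows exactly the same approach as the paper: instantiate the strong convexity inequality with base point $\mb{x}^\ast$ and free point $\mb{x}$, then eliminate the inner-product term using $\nabla\phi(\mb{x}^\ast)=\mb{0}$. Your additional remarks about the direction of substitution and the irrelevance of uniqueness are sound but go beyond what the paper records.
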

\begin{proof}
	By the strong convexity of $\phi(\mb{x})$, for any $\mb{x}$, we have 
	\begin{align*}
	\phi(\mb{x})  \geq& \phi(\mb{x}^\ast) + \inprod{\nabla \phi(\mb{x}^\ast), \mb{x} - \mb{x}^\ast} + \frac{\mu}{2}\norm{\mb{x} - \mb{x}^\ast}^2 \\
	\overset{(a)}{=} &\phi(\mb{x}^\ast) + \frac{\mu}{2}\norm{\mb{x} - \mb{x}^\ast}^2 
	\end{align*}
	where (a) follows from $\nabla \phi(\mb{x}^\ast) = \mb{0}$.
\end{proof}

\begin{Fact}\label{fact:sc-imply-eb}
	Let smooth function $\phi: \mbb{R}^m \rightarrow \mbb{R}$  be strongly convex with modulus $\mu>0$. If $\mb{x}^\ast$ is the (unique) global minimizer of $\phi(\mb{x})$ over $\mbb{R}^m$, then 
	\begin{align}
	\norm{\nabla \phi(\mb{x})} \geq \mu \norm{\mb{x} - \mb{x}^\ast}, \quad \forall \mb{x} \label{eq:sc-error-bound}
	\end{align}
\end{Fact}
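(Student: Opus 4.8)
The plan is to derive the bound directly from the definition of $\mu$-strong convexity together with the first-order optimality condition $\nabla \phi(\mb{x}^\ast) = \mb{0}$ at the (unique) minimizer. \textbf{Step 1.} I would invoke the defining inequality of strong convexity, $\phi(\mb{v}) \geq \phi(\mb{u}) + \inprod{\nabla\phi(\mb{u}), \mb{v}-\mb{u}} + \frac{\mu}{2}\norm{\mb{v}-\mb{u}}^2$, twice: once with $(\mb{u},\mb{v}) = (\mb{x}, \mb{x}^\ast)$ and once with $(\mb{u},\mb{v}) = (\mb{x}^\ast, \mb{x})$. \textbf{Step 2.} Adding the two resulting inequalities cancels the function values, and the cross term $\inprod{\nabla\phi(\mb{x}^\ast), \mb{x}-\mb{x}^\ast}$ vanishes because $\nabla\phi(\mb{x}^\ast) = \mb{0}$; what remains is the strong-monotonicity estimate $\inprod{\nabla\phi(\mb{x}), \mb{x}-\mb{x}^\ast} \geq \mu\norm{\mb{x}-\mb{x}^\ast}^2$. \textbf{Step 3.} Bound the left side from above by Cauchy--Schwarz, $\inprod{\nabla\phi(\mb{x}), \mb{x}-\mb{x}^\ast} \leq \norm{\nabla\phi(\mb{x})}\,\norm{\mb{x}-\mb{x}^\ast}$, so that $\norm{\nabla\phi(\mb{x})}\,\norm{\mb{x}-\mb{x}^\ast} \geq \mu\norm{\mb{x}-\mb{x}^\ast}^2$; dividing by $\norm{\mb{x}-\mb{x}^\ast}$ when $\mb{x}\neq\mb{x}^\ast$ yields the claim, while the case $\mb{x}=\mb{x}^\ast$ is trivial since both sides are zero.

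There is no real obstacle in this argument; the only point requiring care is obtaining the sharp constant $\mu$. A shortcut that uses only a single strong-convexity inequality at $\mb{x}$ together with $\phi(\mb{x}^\ast)\le\phi(\mb{x})$ produces merely $\norm{\nabla\phi(\mb{x})} \geq \frac{\mu}{2}\norm{\mb{x}-\mb{x}^\ast}$, so one genuinely needs either both inequalities as in Steps 1--2 or, equivalently, to feed the quadratic-growth bound of Fact \ref{fact:sc-imply-qg}, i.e. $\phi(\mb{x})-\phi(\mb{x}^\ast)\geq\frac{\mu}{2}\norm{\mb{x}-\mb{x}^\ast}^2$, back into that single inequality after rearranging it as $\inprod{\nabla\phi(\mb{x}),\mb{x}-\mb{x}^\ast}\geq \phi(\mb{x})-\phi(\mb{x}^\ast)+\frac{\mu}{2}\norm{\mb{x}-\mb{x}^\ast}^2$. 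Either route recovers the full factor $\mu$. I would present the two-inequality version, since it is short, elementary, and keeps the appendix self-contained.
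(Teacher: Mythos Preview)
Your argument is correct, but the paper takes a different route. Instead of establishing the strong-monotonicity inequality and then applying Cauchy--Schwarz, the paper chains together two already-proven facts: the P-L inequality (Fact~\ref{fact:sc-imply-pl}), which gives $\frac{1}{2}\norm{\nabla\phi(\mb{x})}^2 \geq \mu\big(\phi(\mb{x})-\phi(\mb{x}^\ast)\big)$, and the quadratic-growth bound (Fact~\ref{fact:sc-imply-qg}), which gives $\phi(\mb{x})-\phi(\mb{x}^\ast)\geq\frac{\mu}{2}\norm{\mb{x}-\mb{x}^\ast}^2$. Combining these yields $\norm{\nabla\phi(\mb{x})}^2 \geq \mu^2\norm{\mb{x}-\mb{x}^\ast}^2$ directly, and taking square roots finishes. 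The paper's version is shorter \emph{in context} because it reuses earlier facts, and it never passes through an inner product or Cauchy--Schwarz; your version is more self-contained since it does not invoke the P-L condition at all, and it also makes transparent why the constant is exactly $\mu$ rather than $\mu/2$. Either proof is perfectly acceptable here.
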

\begin{proof}
	By Fact \ref{fact:sc-imply-qg}, we have
	\begin{align*}
	\phi(\mb{x}) - \phi(\mb{x}^\ast) \geq \frac{\mu}{2}\norm{\mb{x} - \mb{x}^\ast}^2, \quad \forall \mb{x}
	\end{align*} 
	By Fact \ref{fact:sc-imply-pl} and the definition of P-L condition, we have 
	\begin{align*}
	\frac{1}{2}\norm{\nabla \phi(\mb{x})}^2 \geq \mu (\phi(\mb{x}) -  \phi(\mb{x}^\ast), \quad \forall \mb{x}
	\end{align*}
	Combining these two inequalities yields the desired result.
\end{proof}

Both Fact \ref{fact:sc-imply-qg} and Fact \ref{fact:sc-imply-eb} are restricted to strongly convex functions. They can be possibly extended to smooth functions without strong convexity. A generalization of \eqref{eq:sc-quadratic-growth} is known as the quadratic growth condition. Similarly, a generalization of \eqref{eq:sc-error-bound} is known as the error bound condition. In general, both \eqref{eq:sc-quadratic-growth} and \eqref{eq:sc-error-bound}, where $\mb{x}^\ast$ should be replaced by $\mc{P}_{\mathcal{X}^\ast}[\mb{x}]$, i.e., the projection of $\mb{x}$ onto the set of minimizers for $\phi(\mb{x})$ when $\phi(\mb{x})$ does not have a unique minimizer, can be proven to hold as long as smooth $\phi(\mb{x})$ satisfies the P-L condition with the same modulus $\mu$. See Supplement A in \cite{Karimi16} for detailed discussions.

\subsection{Proof of Theorem \ref{thm:PL-rate}} \label{sec:pf-PL-rate}

Fix $T>1$. Let $\mb{x}_t$ be the solution returned by Algorithm \ref{alg:new} when it terminates.  According to the ``while" condition in Algorithm \ref{alg:new}, we must have $\sum_{\tau=0}^{t-1} \lfloor \rho^\tau B_1\rfloor \geq T$, which further implies $\sum_{\tau=0}^{t-1} \rho^\tau B_1 \geq T$. Simplifying the partial sum of geometric series and rearranging terms yields
\begin{align}
    t \geq& \log_{\rho} \left(\frac{T(\rho-1)}{B_1} + 1\right)  \nonumber \\
    \overset{(a)}{\geq}& \log_{\rho} \left((\rho-1) \frac{T}{B_1}\right) \nonumber\\
    =& \log_{\frac{1}{\rho}} \left(\frac{B_1}{T(\rho-1)}\right) \label{eq:pf-thm-PL-eq1}
\end{align}
where (a) follows because $\rho > 1$.

By Lemma \ref{lm:PL-one-step}, for all $\tau\in\{1,2,\ldots,t-1\}$, we have
\begin{align*}
    &\mbb{E}[f(\mb{x}_{\tau+1}) - f^\ast] \nonumber\\
    \leq& (1-\nu) \mbb{E}[f(\mb{x}_{\tau}) - f^\ast] + \frac{\gamma (2-L\gamma)}{2NB_\tau}\sigma^2\\
    \overset{(a)}{\leq}& (1-\nu) \mbb{E} [f(\mb{x}_{\tau}) - f^\ast] + \frac{\gamma (2-L\gamma)\sigma^2}{N B_1}\frac{1}{\rho^{\tau-1}}
\end{align*}
where (a) follows by recalling $B_{\tau} = \lfloor \rho^{\tau-1} B_1\rfloor$ and noting $\lfloor z \rfloor > \frac{1}{2} z$ as long as $z \geq 2$.

Recursively applying the above inequality from $\tau=1$ to $\tau=t-1$ yields
\begin{align}
    &\mbb{E}[f(\mb{x}_t) -f^\ast] \nonumber\\
    \leq& (1-\nu)^{t-1} \left(f(\mb{x}_{1}) - f^\ast\right)+ \frac{\gamma (2-L\gamma)\sigma^2}{N B_1}\sum_{\tau=0}^{t-2} (1-\nu)^\tau (\frac{1}{\rho})^{t-2-\tau} \nonumber\\
    = & (1-\nu)^{t-1} \left(f(\mb{x}_{1}) - f^\ast\right)+ \frac{\gamma (2-L\gamma)\sigma^2}{N B_1}(\frac{1}{\rho})^{t-2}\sum_{\tau=0}^{t-2} ((1-\nu)\rho)^\tau \nonumber\\
    \overset{(a)}{\leq} & (1-\nu)^{t-1} \left(f(\mb{x}_{1}) - f^\ast\right)+ \frac{\gamma (2-L\gamma)\sigma^2}{N B_1}(\frac{1}{\rho})^{t-2} \frac{1}{1 - (1-\nu)\rho } \nonumber\\
    \overset{(b)}{\leq} & (1-\nu)^{\log_{\frac{1}{\rho}} \left(\frac{B_1}{T(\rho-1)}\right) } \frac{1}{1-\nu}\left(f(\mb{x}_{1}) - f^\ast)\right)+ \frac{\gamma (2-L\gamma)\sigma^2}{N} \frac{1}{1 - (1-\nu)\rho } \frac{\rho^2}{T(\rho -1) } \nonumber\\
    \overset{(c)}{=}& \Big(\frac{B_1}{T(\rho -1)}\Big)^{\log_{\frac{1}{\rho}}(1-\nu)} \frac{1}{1-\nu}\left(f(\mb{x}_{1}) - f^\ast\right) + \frac{\rho^2 \gamma (2-L\gamma)\sigma^2}{(1 - (1-\nu)\rho)(\rho - 1)}\frac{1}{NT} \nonumber \\
    = & \frac{1}{1-\nu}\left(f(\mb{x}_{1}) - f^\ast\right) \Big(\frac{B_1}{
    \rho -1}\Big)^{\log_{\frac{1}{\rho}}(1-\nu)} \frac{1}{T^{\log_{\frac{1}{\rho}}(1-\nu)}} + \frac{\rho^2 \gamma (2-L\gamma)\sigma^2}{(1 - (1-\nu)\rho)(\rho - 1)}\frac{1}{NT} \nonumber \\
    \overset{(d)}{=}&   \frac{1}{1-\nu}\left(f(\mb{x}_{1}) - f^\ast\right) \Big(\frac{B_1}{\rho -1}\Big)^{ \log_{\rho}(\frac{1}{1-\nu})} \frac{1}{T^{\log_{\rho}(\frac{1}{1-\nu})}} + \frac{\rho^2    \gamma (2-L\gamma)\sigma^2}{(1 - (1-\nu)\rho)(\rho - 1)}\frac{1}{NT} 
    	 \label{eq:pf-thm-PL-eq2}
\end{align}
where (a) follows by simplifying the partial sum of geometric series and noting that $(1-\nu)\rho < 1$; (b) follows by substituting \eqref{eq:pf-thm-PL-eq1} and noting that $0< 1-\nu < 1$ and $0<\frac{1}{\rho}<1$; (c) follows by noting that $\log_{\frac{1}{\rho}} \Big(\frac{B_1}{T(\rho -1)  }\Big) = \frac{\log_{1-\nu} \big(\frac{B_1}{T(\rho - 1)  }\big)}{\log_{1-\nu}\big(\frac{1}{\rho}\big)} = \log_{\frac{1}{\rho}}(1-\nu)\log_{1-\nu} \Big(\frac{B_1}{T(\rho -1) }\Big) =  \log_{1-\nu} \Big(\Big(\frac{B_1}{T(\rho -1)}\Big)^{\log_{\frac{1}{\rho}}(1-\nu)}\Big)$; and (d) follows from $\log_{\frac{1}{\rho}}(1-\nu) = \log_{\rho}(\frac{1}{1-\nu})$.

\end{document}